    \newenvironment{dedication}
\par\end{em}\end{center}\end{quotation}}
\newtheorem{theorem}{Theorem}[section]
\newtheorem{corollary}[theorem]{Corollary}
\newtheorem{lemma}[theorem]{Lemma}
\newtheorem{proposition}[theorem]{Proposition}
\theoremstyle{definition}
\newtheorem{definition}[theorem]{Definition}
\newtheorem{remark}[theorem]{Remark}
\newtheorem{example}[theorem]{Example}
\DeclareMathOperator{\Hom}{\mathsf{Hom}}
\DeclareMathOperator{\Ext}{\mathsf{Ext}}
\DeclareMathOperator{\End}{\mathsf{End}}
\DeclareMathOperator{\Spec}{\mathsf{Spec}}
\newcommand{\kk}{\mathbbm{k}}
\newcommand{\kEnd}{\mbox{\textit{End}}}
\newcommand{\kHom}{\mbox{\textit{Hom}}}
\newcommand{\kExt}{\mbox{\textit{Ext}}}
\newcommand{\kMat}{\mbox{\textit{Mat}}}
\def\kA{\mathcal A} 
\def\kB{\mathcal B} \def\kO{\mathcal O}
\def\kC{\mathcal C} \def\kP{\mathcal P}
 \def\kQ{\mathcal Q}
\def\kF{\mathcal F} \def\kS{\mathcal S}
\def\kG{\mathcal G} \def\kT{\mathcal T}
\def\kH{\mathcal H} 
\def\kI{\mathcal I} 
\def\kJ{\mathcal J} 
\def\kK{\mathcal K} \def\kX{\mathcal X}
 \def\kY{\mathcal Y}
\newcommand{\bbX}{\mathbb{X}}
 \def\sP{\mathsf P}
\def\sD{\mathsf D} 
 \def\sR{\mathsf R}
\def\sF{\mathsf F} 
\def\sG{\mathsf G} 
\def\sH{\mathsf H} 
\def\sI{\mathsf I} 
\def\sJ{\mathsf J} 
\def\sL{\mathsf L}
\def\al{\alpha}       
\def\be{\beta}        
\def\ga{\gamma}
\def\kron#1#2{\xymatrix@C=2em{{#1}\ar@/^3pt/[r]\ar@/_3pt/[r]&{#2}}}
\def\bu{{\scriptscriptstyle\bullet}}
\def\xarr{\xrightarrow}
\def\smtr#1{\left(\begin{smallmatrix}#1\end{smallmatrix}\right)}
\title[Singular curves and quasi--hereditary algebras]{Singular curves and quasi--hereditary algebras}
\author{Igor Burban}
\address{
Universit\"at zu K\"oln,
Mathematisches Institut,
Weyertal 86-90,
D-50931 K\"oln,
Germany
}
\email{burban@math.uni-koeln.de}
\author{Yuriy Drozd}
\address{
 Institute of Mathematics\\
National Academy of Sciences of Ukraine,
Tereschenkivska str. 3,
01004 Kyiv, Ukraine}
\email{drozd@imath.kiev.ua, y.a.drozd@gmail.com}
\urladdr{www.imath.kiev.ua/$\sim$drozd}
\author{Volodymyr Gavran}
\address{
 Institute of Mathematics\\
National Academy of Sciences of Ukraine,
Tereschenkivska str. 3,
01004 Kyiv, Ukraine
}
\email{vlgvrn@gmail.com}
\subjclass[2000]{Primary 14F05, 14A22, 16E35}
\begin{document}
\maketitle

\begin{dedication}
\vspace*{1mm}{To the memory of Sergiy Ovsienko}
\end{dedication}

\noindent
\newcommand{\Perf}{\mathop{\mathsf{Perf}}\nolimits}
\newcommand{\Coh}{\mathop{\mathsf{Coh}}\nolimits}
\newcommand{\Qcoh}{\mathop{\mathsf{Qcoh}}\nolimits}
\newcommand{\lar}{\longrightarrow}
\newcommand{\idm}{\mathfrak{m}}

\begin{abstract}
In this article  we construct a categorical resolution of singularities of an excellent reduced curve  $X$, introducing a certain sheaf of orders on $X$. This categorical resolution is shown to be  a recollement of the derived category of coherent sheaves on the normalization of $X$ and the derived category of finite length modules over a certain artinian quasi--hereditary  ring $Q$ depending purely on the local singularity types of $X$.

Using this technique, we prove several statements on the Rouquier
dimension of the derived category of coherent sheaves on $X$. Moreover, in the case $X$ is rational and projective we construct  a finite dimensional quasi--hereditary algebra $\Lambda$ such that the triangulated category
$\Perf(X)$ embeds into $D^b(\Lambda-\mathsf{mod})$ as a full subcategory.
\end{abstract}
\section{Introduction}
Let $X$ be a curve, $\widetilde{X} \stackrel{\nu}\rightarrow X$ its normalization, $\kO = \kO_X$  and $\widetilde\kO =\nu_*(\kO_{\widetilde{X}})$.
 Generalizing an original  idea of K\"onig \cite{koe},  we define a sheaf of orders $\kA$ on $X$  called \emph{K\"onig's order} such that the ringed space
$\bbX = (X, \kA)$ has the following properties.

\medskip
\noindent
1.~The non--commutative curve $\bbX$ is ``smooth'' in the sense that \linebreak
$\mathrm{gl.dim}\bigl(\Coh(\bbX)\bigr) < \infty$, where $\Coh(\bbX)$ is the category of coherent $\kA$--modules on $X$.  In fact, $\mathrm{gl.dim}\bigl(\Coh(\bbX)\bigr) \le 2n$, where $n$ is a certain (purely commutative) invariant of $X$ called \emph{level}. If the original curve  $X$ has only nodes and cusps as singularities, the sheaf $\kA$ coincides with \emph{Auslander's order}
    $$
    \left(
    \begin{array}{cc}
    \kO & \widetilde\kO \\
    \kI & \widetilde\kO
    \end{array}
    \right)
    $$
  introduced in \cite{bd}, where $\kI$ is the ideal sheaf of the singular locus of $X$.

\medskip
\noindent
2.~The non--commutative curve $\bbX$ is a \emph{non--commutative} (or \emph{categorical}) \emph{resolution of singularities} of $X$, see  \cite{vdb,ku} for the definitions.
The category $\Coh(X)$ of coherent sheaves on $X$ is a Serre quotient of $\Coh(\bbX)$. Moreover, the triangulated category $\Perf(X)$ of perfect complexes on $X$ admits  an exact fully faithful embedding $\Perf(X) \hookrightarrow D^b\bigl(\Coh(\bbX)\bigr)$ such that its composition with the Verdier localization $D^b\bigl(\Coh(\bbX)\bigr) \rightarrow D^b\bigl(\Coh(X)\bigr)$ is isomorphic to the canonical inclusion functor.
If the curve $X$ is Gorenstein, the constructed  categorical resolution of singularities of $X$ turns out to be  \emph{weakly crepant} in the sense of Kuznetsov \cite{ku}.

\medskip
\noindent
3.~We show that the triangulated category $D^b\bigl(\Coh(\bbX)\bigr)$
is a recollement of $D^b\bigl(\Coh(\widetilde{X})\bigr)$ and $D^b(Q-\mathrm{mod})$,
where $Q$ is a certain \emph{quasi-hereditary} artinian ring (in particular, of finite global dimension), determined ``locally'' by the singularity types  of the singular points of $X$. In the case of simple curve singularities, we describe the corresponding algebras $Q$
explicitly in terms of quivers and relations.

    \medskip
\noindent
4.~Assume $X$ is projective over some field $\kk$. According to Orlov \cite{or}, the Rouquier dimension \cite{rou} of the triangulated category
$D^b\bigl(\Coh(\widetilde{X})\bigr)$ is equal to \emph{one}. Let $\widetilde\kF$ be a vector bundle on $\tilde{X}$ such that
$\langle\widetilde{\kF}\rangle_2 =   D^b\bigl(\Coh(\widetilde{X})\bigr)$ and $\kF = \nu_*(\widetilde\kF)$. We show that
$
D^b\bigl(\Coh(X)\bigr) = \bigl\langle \kF \oplus \kO_Z\bigr\rangle_{n+2}
$
where $\kO_Z$ is the structure sheaf of the singular locus of $X$ (with respect to the reduced scheme structure) and
$n$ is the level of $X$.

\medskip
\noindent
5.~If our original curve $X$ is moreover  rational, then we show that $D^b\bigl(\Coh(\bbX)\bigr)$ admits a
\emph{tilting object} $\kH$ such that the finite dimensional $\kk$--algebra $\Lambda = \bigl(\End_{D^b(\bbX)}(\kH)\bigr)^{\mathrm{op}}$ is  quasi--hereditary. In particular, we get an exact  fully faithful embedding
    $\Perf(X) \hookrightarrow D^b(\Lambda-\mathrm{mod})$,
    giving an affirmative answer on  a question posed to the first--named author by Valery Lunts.

\medskip
\noindent
\emph{Acknowledgement}. The work on this article has been started  during the stay of the second--named author at the Max--Planck--Institut f\"ur Mathematik in Bonn. Its final version was prepared during the visit of the second-- and the third--named author to the Institute of Mathematics of the University of Cologne. The first--named author would like to thank Valery Lunts for the invitation and iluminative discussions during his visit to the Indiana University Bloomington. We  are thankful to the referees for their  helpful comments.

\section{Local description of K\"onig's order}\label{Sec:LocalStory}

Let $(O, \idm)$ be a reduced Noetherian local ring of Krull dimension one, $K$ be its total ring of fractions and
$\widetilde{O}$ be the normalization of $O$.

\begin{proposition}\label{P:normalizelocal} Consider the ring $O^\sharp = \End_{O}(\idm)$. Then the following properties hold.
\begin{itemize}
\item $O^\sharp \cong \left\{x \in K \big| x \idm \subset \idm \right\}$. Moreover,
$O \subseteq O^\sharp \subseteq \widetilde{O}$ and $O  =  O^\sharp$ if and only if $O$ is regular.
\item Assume that $O$ is not regular. Then the canonical morphisms of $O$--modules
$$
\idm \stackrel{\varphi}\lar \Hom_{O}(O^\sharp, O) \quad \mbox{and} \quad
O^\sharp \stackrel{\psi}\lar \Hom_{O}(\idm, O)
$$
are isomorphisms.
\end{itemize}
\end{proposition}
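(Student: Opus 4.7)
The plan is to realise $O^\sharp$ and the relevant Hom groups as fractional ideals inside $K$ and then invoke standard identities valid for Cohen--Macaulay local rings of dimension one.

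For the first bullet the key preliminary is that $\idm$ contains a regular element: in the reduced ring $O$ the zero--divisors form the union of the minimal primes (all of height zero), while $\idm$ has height one. For $\varphi \in \End_O(\idm)$ I would pick such a regular $a \in \idm$ and set $x_\varphi = \varphi(a)/a \in K$; the identity $a\varphi(b) = \varphi(ab) = b\varphi(a)$ then gives $\varphi(b) = x_\varphi b$ for every $b \in \idm$, which yields the required isomorphism $O^\sharp \cong \{x \in K \mid x\idm \subseteq \idm\}$. The inclusion $O \subseteq O^\sharp$ is clear, and $O^\sharp \subseteq \widetilde O$ follows from the determinant (Cayley--Hamilton) trick applied to the faithful finitely generated $O$-module $\idm$.

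For the equivalence ``$O = O^\sharp$ iff $O$ is regular'' the forward implication is automatic: a DVR has principal maximal ideal, so $\idm \cong O$ as $O$-modules and $\End_O(\idm) = O$. For the converse, I would first note that a reduced Noetherian local ring of dimension one is Cohen--Macaulay of depth one, and apply $\Hom_O(-,O)$ to $0 \to \idm \to O \to k \to 0$ to obtain $0 \to O \to \idm^{-1} \to \Ext^1_O(k, O) \to 0$, with $\idm^{-1} := (O :_K \idm)$; the rightmost term is nonzero by the depth characterisation of Ext, giving the strict inclusion $\idm^{-1} \supsetneq O$. If $O$ is not regular, then $\idm$ is not principal, hence not invertible, so $\idm \cdot \idm^{-1}$ is a proper ideal of $O$ and is therefore contained in $\idm$; this forces $\idm^{-1} \subseteq O^\sharp$, and combining with the previous step yields $O^\sharp \supseteq \idm^{-1} \supsetneq O$.

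For the second bullet I would invoke the standard isomorphism $\Hom_O(I, O) \cong (O :_K I)$ valid for any fractional ideal $I$ of $O$ containing a regular element, obtained by sending $f$ to $f(a)/a$ for a chosen regular $a \in I$. Applied with $I = \idm$ and $I = O^\sharp$, this converts $\psi$ and $\varphi$ into the inclusions $O^\sharp \hookrightarrow \idm^{-1}$ and $\idm \hookrightarrow (O :_K O^\sharp)$ respectively. The first is an equality by the previous paragraph. For the second, the inclusion holds because $O^\sharp \cdot \idm \subseteq \idm \subseteq O$; conversely, any $y \in (O :_K O^\sharp) \setminus \idm$ would be a unit of $O$, giving $O^\sharp \subseteq y^{-1}O = O$ and contradicting the assumption $O \neq O^\sharp$. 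The main obstacle in the argument is securing the strict inclusion $\idm^{-1} \supsetneq O$; once this and the colon--ideal identification of Homs are in place, the rest is routine fractional--ideal bookkeeping.
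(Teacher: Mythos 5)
Your proof is correct, and for the second bullet it follows essentially the same route as the paper: both arguments identify the Hom--groups with the colon fractional ideals $(O:_K O^\sharp)$ and $(O:_K \idm)=\idm^{-1}$, then use that $(O:_K O^\sharp)$ is a proper (hence $\subseteq\idm$) ideal of $O$ and that $\idm\idm^{-1}$ cannot equal $O$ (no surjection $\idm\twoheadrightarrow O$, i.e.\ $\idm$ not invertible). The first bullet is merely cited in the paper, and the self-contained argument you supply there (regular element trick for $\End_O(\idm)\cong\{x\in K\mid x\idm\subseteq\idm\}$, Cayley--Hamilton for $O^\sharp\subseteq\widetilde O$, and the depth-one $\Ext$ computation giving $\idm^{-1}\supsetneq O$) is standard and sound.
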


\begin{proof}
For the first part,  see for example  \cite[Proposition 4]{com} or  \cite[Theorem 1.5.13]{deJongPfister}. To show the second part, note that $\varphi$ assigns
to an element $a \in \idm$ a morphism $O^\sharp \stackrel{\varphi_a}\lar O$, where $\varphi_a(x) = ax$. It is clear that $\varphi$ is injective. Since $\Hom_{O}(O^\sharp, O)$  viewed as a subset of $K$ is a proper  ideal in $O$, it is contained in $\idm$.
Hence, $\varphi$
 is also surjective, hence bijective.

Next, the canonical morphism $\Hom_O(\idm, \idm) \stackrel{\psi}\lar \Hom_O(\idm, O)$ is injective. On the other hand, there are  no surjective morphisms $\idm \rightarrow O$ (otherwise, $O$ would be a discrete valuation domain), hence the image of any morphism $\idm \rightarrow O$ belongs to $\idm$ and $\psi$ is surjective.
\end{proof}

From now on, let $O$ be an \emph{excellent} reduced Noetherian ring of Krull dimension one (see for example
\cite[Section 8.2]{liu} for the definition and main properties of excellent rings). As before, $K$ denotes its total ring of fractions and
$\widetilde{O}$ is the normalization of $O$. Let $X = \Spec(O)$ and $Z$ be the singular locus of $X$ equipped
with the reduced scheme structure. In other words,
$$Z =
\bigl\{\idm_1, \dots, \idm_t\bigr\}
= \bigl\{\idm \in \Spec(O) \; \big| \; O_\idm \; \mbox{is not regular}\bigr\}
$$
(the condition that $O$ is excellent implies that $Z$ is indeed a finite set).

\begin{proposition}\label{P:DetailsKoenigResol} Let
$
I = I_Z = \idm_1 \cap \dots \cap \idm_t
$
be the vanishing  ideal  of $Z$ and $O^\sharp = \End_{O}(I)$. Then the  following properties are true.
\begin{itemize}
\item $O^\sharp \cong \left\{x \in K \big| x \idm \subset \idm \right\}$. Moreover,
$O \subseteq O^\sharp \subseteq \widetilde{O}$ and $O  =  O^\sharp$ if and only if $O$ is regular.
\item Assume that $O$ is not regular. Then the canonical morphisms of $O$--modules
$
\idm \stackrel{\varphi}\lar \Hom_{O}(O^\sharp, O)$ and
$O^\sharp \stackrel{\psi}\lar \Hom_{O}(\idm, O)$
are isomorphisms.
\end{itemize}
\end{proposition}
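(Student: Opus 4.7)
The plan is to reduce both claims to the local Proposition \ref{P:normalizelocal} by localization at the (finitely many) maximal ideals of $O$, after first identifying $O^\sharp$ globally as a subring of $K$. Since $O$ is a reduced Noetherian ring of Krull dimension one, the associated primes coincide with the minimal primes and a non-zero-divisor is an element avoiding all of them. No maximal ideal $\idm_j$ is a minimal prime (otherwise $O_{\idm_j}$ would be a zero-dimensional reduced local ring, hence a field, contradicting that $\idm_j \in Z$), so prime avoidance forces $I$ to contain a non-zero-divisor. Consequently $K \otimes_O I \cong K$ and the canonical map $\End_O(I) \hookrightarrow \End_K(K) = K$ identifies $O^\sharp$ with $\{x \in K \mid xI \subseteq I\}$. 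The inclusion $O \subseteq O^\sharp$ is tautological, and $O^\sharp \subseteq \widetilde O$ follows because any $x \in O^\sharp$ preserves the faithful finitely generated $O$-module $I$ and is therefore integral over $O$.

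The key local computation is that at each singular point one has $I_{\idm_i} = \idm_i O_{\idm_i}$. This comes from prime avoidance among the maximal ideals: choose $y_j \in \idm_j \smallsetminus \idm_i$ for every $j \ne i$; then for $x \in \idm_i$ the product $x \prod_{j \ne i} y_j$ lies in $I$, while $\prod_{j \ne i} y_j$ becomes a unit in $O_{\idm_i}$. Since $I$ is finitely presented over the Noetherian ring $O$, formation of $\End_O(I)$ commutes with localization, yielding $(O^\sharp)_{\idm_i} = \End_{O_{\idm_i}}(\idm_i O_{\idm_i}) = (O_{\idm_i})^\sharp$. At a non-singular maximal ideal $\mathfrak p$ we have $I \not\subseteq \mathfrak p$, hence $I_{\mathfrak p} = O_{\mathfrak p}$ and $(O^\sharp)_{\mathfrak p} = O_{\mathfrak p}$. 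The equivalence ``$O = O^\sharp$ iff $O$ is regular'' then reduces to checking the condition at every maximal ideal and invoking Proposition \ref{P:normalizelocal}.

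For the second bullet, the maps $\varphi$ and $\psi$ are given by multiplication, so their formation commutes with localization (using that $O^\sharp$ is module-finite over $O$ by excellence, and that $I$ is finitely presented). At a non-singular maximal ideal both sides localize to $O_{\mathfrak p}$ and the maps become identities; at each $\idm_i$ the localized maps, via the identifications $I_{\idm_i} = \idm_i O_{\idm_i}$ and $(O^\sharp)_{\idm_i} = (O_{\idm_i})^\sharp$, coincide with the isomorphisms furnished by the second part of Proposition \ref{P:normalizelocal}. Since a morphism of $O$-modules is an isomorphism iff it is so after localizing at every maximal ideal, the conclusion follows. The only step that demands genuine care is the identification $I_{\idm_i} = \idm_i O_{\idm_i}$; everything else is a routine exchange of Hom with localization, and no serious obstacle is to be expected.
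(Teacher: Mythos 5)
Your proof is correct and takes essentially the same route as the paper: for the second bullet the paper likewise observes that $\varphi$ and $\psi$ commute with localization at maximal ideals and then invokes Proposition~\ref{P:normalizelocal}. For the first bullet the paper simply cites the literature rather than arguing directly, but the detail you single out as the crux --- namely $I_{\idm_i} = \idm_i O_{\idm_i}$ via prime avoidance among the $\idm_j$ --- is exactly the hidden ingredient that makes the localization reduction legitimate, and your treatment of it (together with the non-zero-divisor in $I$, the identification $O^\sharp = \{x \in K \mid xI \subseteq I\}$, and integrality giving $O^\sharp \subseteq \widetilde O$) is sound.
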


\begin{proof}
For the first part, see again \cite[Proposition 4]{com} or  \cite[Theorem 1.5.13]{deJongPfister}. To prove the second, observe that the maps $\varphi$ and $\psi$ are well--defined and compatible with localizations with respect to a maximal ideal.
Hence, Proposition \ref{P:normalizelocal} implies the claim.
\end{proof}

We define a sequence of overrings $O_i$ of the initial ring $O$ by the following recursive procedure:
\begin{itemize}
\item $O_1 = O$.
\item $O_{i+1} = O_i^\sharp$ for $i \ge 1$.
\end{itemize}
Since the ring $O$ is excellent, the normalization $\widetilde{O}$ is finite over $O$, see for example \cite[Theorem 6.5]{ddone} or \cite[Section 8.2]{liu}. Hence,
there exists $n \in \mathbb{N}$ (called the \emph{level} of $O$) such that we have a finite chain of overrings
$$
O_1 \subset O_2 \subset \dots \subset O_n \subset O_{n+1}
$$
with $O_1 = O$ and
$O_{n+1} = \widetilde{O}$.

\begin{definition}\label{D:KoenigsOrder}
The ring $A := \End_{O}(O_1 \oplus O_2 \oplus \dots \oplus O_{n+1})^{\mathrm{op}}$ is called the \emph{K\"onig's order} of $O$.
\end{definition}

\begin{proposition}\label{P:KoenigOrderMatrDescr}
For any $1 \le i, j \le n+1$ pose $A_{ij} := \Hom_{O}(O_i, O_j)$. Then the following properties are true.
\begin{itemize}
\item For $i \le j$ we have: $A_{ij} \cong O_j$.
\item For $i > j$ we have: $A_{ij} \cong I_{i,j} := \Hom_{O_j}(O_i, O_j)$. In particular,  $I_{n+1,1} \cong C := \Hom_{O}(\widetilde{O}, O)$
is the \emph{conductor ideal}.
\item Next, $I_i:= I_{i+1,i}$ is the ideal of the singular locus of $\Spec(O_i)$ and
the ring $\bar{O}_i := O_i/I_i$ is semi--simple.
\item Moreover, the ideal  $I_{n+1, k}$ is projective over  $\kO_{n+1}$ for any $1 \le k \le n$.
\item The ring $A$  admits  the following ``matrix description'':
\begin{equation}\label{E:KoenigOrder}
A \cong
\left(
\begin{array}{llccc}
O_1 & O_2 & \dots & O_n & O_{n+1} \\
I_1 & O_2 & \dots & O_n & O_{n+1} \\
\vdots & \vdots & \ddots & \vdots & \vdots \\
I_{n,1} & I_{n, 2} & \dots & O_n & O_{n+1} \\
I_{n+1,1} & I_{n+1, 2} & \dots & I_n & O_{n+1} \\
\end{array}
\right)
\end{equation}
and $A \otimes_O K \cong \mathsf{Mat}_{n+1, n+1}(K)$. In other words, $A$ is an \emph{order} in the semi-simple algebra
$\mathsf{Mat}_{n+1, n+1}(K)$.
\item For any $2 \le i \le n+1$ and $1 \le j \le n$ we have inclusions
\begin{itemize}
\item $I_{i,1} \subset I_{i,2} \subset \dots \subset I_{i,i-1} \subset O_i \subset \dots \subset O_{n+1}$
\item $I_{n+1,j} \subset I_{n,j} \subset \dots \subset I_{j+1,j} \subset O_j$
\end{itemize}
describing the ``hierarchy'' between  the entries in every row and every column in the matrix description
(\ref{E:KoenigOrder}) of the ring $A$.
\end{itemize}
\end{proposition}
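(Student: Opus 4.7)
The strategy is to carry out every computation inside the ambient total ring of fractions $K$, where all of $O = O_1 \subset O_2 \subset \dots \subset O_{n+1} = \widetilde O$ sit as $O$--subalgebras. The key observation is that any $O$--linear map $\varphi\colon O_i \to O_j$ between two such subrings extends uniquely to a $K$--linear endomorphism of $K$ (each $O_i$ is a finite $O$--submodule of $K$ in which every non--zero--divisor of $O$ is invertible in $K$), and is therefore multiplication by $\varphi(1) \in K$. This yields the fundamental identification
$$
\Hom_O(O_i, O_j) \;\cong\; \bigl\{a \in K \,\bigm|\, a\, O_i \subset O_j\bigr\} \;\subset\; K,
$$
with composition on the left corresponding to multiplication of scalars in $K$.

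From this description, parts (1)--(2) are immediate: for $i \le j$ we have $O_i \subset O_j$, so the condition $a O_i \subset O_j$ reduces to $a \in O_j$ (one direction by evaluating at $1$, the other because $O_j$ is a ring containing $O_i$), giving $A_{ij} \cong O_j$; for $i > j$ the right--hand side is by definition $I_{i,j} = \Hom_{O_j}(O_i, O_j)$, and the special case $I_{n+1,1} = \{a\in K \mid a \widetilde O \subset O\}$ is the conductor $C$. For part (3) I apply Proposition~\ref{P:DetailsKoenigResol} to $O_i$ itself --- legitimate because $O_i$ is finite over $O$ and contained in $\widetilde O$, hence again excellent, reduced, Noetherian of Krull dimension one --- which yields $\Hom_{O_i}(O_i^\sharp, O_i) \cong I_{\mathrm{sing}}(O_i)$, i.e.\ $I_i \cong I_{\mathrm{sing}}(O_i)$. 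Part (4) then follows by the Chinese Remainder Theorem applied to $O_i/I_i = O_i/\bigcap_k \idm_{i,k}$, which is a finite product of residue fields. For part (5), the normalisation $O_{n+1} = \widetilde O$ decomposes as a finite product of Dedekind domains, and each $I_{n+1,k}$ is a finitely generated fractional $\widetilde O$--ideal inside $K$ (closure under multiplication by $\widetilde O$ is built into the defining condition), hence invertible and projective.

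The matrix description (\ref{E:KoenigOrder}) assembles the direct sum decomposition of $\End_O(O_1 \oplus \dots \oplus O_{n+1})^{\mathrm{op}}$ with the entries read off above; composition in $A$ corresponds to multiplication of fractional ideals inside $K$, which matches ordinary matrix multiplication. Tensoring with $K$ sends each $O_i$ to $K$, so $A \otimes_O K \cong \End_K(K^{n+1})^{\mathrm{op}} \cong \Mat_{n+1, n+1}(K)$, showing $A$ is an order in this semi--simple algebra. Finally, both inclusion chains drop out of $I_{i,j} = \{a \in K \mid aO_i \subset O_j\}$: enlarging the target $O_j$ with $i$ fixed weakens the condition and so enlarges the set, enlarging the source $O_i$ with $j$ fixed strengthens the condition and so shrinks the set, while the terminal inclusions $I_{i,i-1} \subset O_i$ and $I_{j+1,j} \subset O_j$ come from evaluating at $1$. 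The only genuinely non--trivial point in the whole proof is the iterability of Proposition~\ref{P:DetailsKoenigResol} at each level, i.e.\ checking that the hypotheses persist under the operation $O_i \leadsto O_i^\sharp$; once this is in place, everything else is bookkeeping inside $K$.
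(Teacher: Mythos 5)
Your proof is correct and follows essentially the same strategy as the paper: identify each $\Hom_O(O_i,O_j)$ with a fractional ideal $\{a\in K\mid aO_i\subset O_j\}$, invoke Proposition~\ref{P:DetailsKoenigResol} for the singular--locus ideal, use regularity of $O_{n+1}$ for projectivity of $I_{n+1,k}$, and read the hierarchy inclusions off the inclusions $O_j\subset O_{j+1}$. The paper's write-up is terser but uses the same $K$-embedding picture implicitly, so this is a correct reconstruction with somewhat more detail rather than a different route.
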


\begin{proof}
We have the following canonical isomorphisms of $O$--modules:
$$
O_j \cong \Hom_{O_i}(O_i, O_j) \stackrel{\cong}\lar \Hom_{O}(O_i, O_j)
$$
provided $i \le j$ as well as
$$
I_{i,j} := \Hom_{O_j}(O_i, O_j) \stackrel{\cong}\lar \Hom_{O}(O_i, O_j)
$$
for $i > j$. Proposition \ref{P:DetailsKoenigResol} implies that the ideal $I_i = I_ {i+1, i}$ is  indeed the ideal of the singular locus of $\Spec(O_i)$, hence the quotient $\bar{O}_i = O_i/I_i$ is semi--simple.
Since the ring $O_{n+1}$ is regular and the ideal  $I_{n+1,k}$ is torsion free as $O_{n+1}$--module, it is projective over $O_{n+1}$.

Finally, for any $1 \le j \le n$ and $1 \le i \le n+1$ the inclusion $O_j \subset O_{j+1}$ induces embeddings of $O$--modules
\begin{align*}
 \Hom_{O}(O_{j+1}, O_i)& \hookrightarrow \Hom_{O}(O_{j}, O_i)\\
 \intertext{and}
 \Hom_{O}(O_{i}, O_j)&\hookrightarrow \Hom_{O}(O_{i}, O_{j+1}).
\end{align*}
\vskip-1.5em
\end{proof}

\begin{remark}
The idea to study such a ring $A$ is due to K\"onig \cite{koe}, who considered a similar but slightly different construction.
\end{remark}

\smallskip
\noindent
For any $1 \le i \le n+1$ let  $e_i = e_{i,i}$ be the $i$-th standard idempotent of $A$ with respect to the presentation (\ref{E:KoenigOrder}). For $1 \le k \le n$ we denote
\begin{itemize}
\item $\varepsilon_{k} := \sum\limits_{i = k+1}^{n+1} e_i$, $J_k := A \varepsilon_k A$ and
$Q_k := A/J_k$.
\item In what follows we write $e = e_{n+1}$, $J = A e A = J_{n+1}$ and $Q := A/J = Q_n$.
\end{itemize}

\begin{theorem}\label{T:GlobalDimensionKoenigOrder} The global dimension of $A$ is finite:
 $\mathrm{gl.dim}(A) \le 2n$. Moreover, the artinian ring $Q = Q_O$ is quasi--hereditary (hence, its global dimension is finite, too).
\end{theorem}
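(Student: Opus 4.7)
The proof splits into two interconnected tasks: establishing quasi-heredity of $Q = A/J$ (which automatically yields $\mathrm{gl.dim}(Q) < \infty$) and then using this to bound $\mathrm{gl.dim}(A)$ by a recollement-type argument centered on the idempotent $e = e_{n+1}$. My plan is to handle these in that order. Throughout I would work with the chain of idempotents $\varepsilon_k = e_{k+1} + \dots + e_{n+1}$ and the associated two-sided ideals $J_k = A\varepsilon_k A$, with $\varepsilon_0 = 1$ and $\varepsilon_n = e$.

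For the quasi-hereditary structure on $Q$ I would exhibit the chain
$$0 = \bar J_n \subset \bar J_{n-1} \subset \dots \subset \bar J_0 = Q, \qquad \bar J_k := J_k/J_n,$$
and verify that each $\bar J_k/\bar J_{k+1}$ is a heredity ideal in $Q/\bar J_{k+1} \cong A/J_{k+1}$, generated by the image of the idempotent $e_{k+1}$. Semisimplicity of the corner ring reduces to the identity $e_{k+1}(A/J_{k+1})e_{k+1} \cong \bar O_{k+1}$, a direct computation with (\ref{E:KoenigOrder}) based on the equality $O_m I_{k+1} = I_{k+1}$ for $m \ge k+2$ (since $O_m \subseteq O_{k+1}^\sharp$; see Proposition \ref{P:DetailsKoenigResol}) together with the inclusions $I_{m,k+1} \subseteq I_{k+1}$ from Proposition \ref{P:KoenigOrderMatrDescr}; the semisimplicity of $\bar O_{k+1}$ is then the third bullet of Proposition \ref{P:KoenigOrderMatrDescr}. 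For projectivity of the heredity ideal as a left $(A/J_{k+1})$-module, I would reduce (using the idempotent generation and the equivalence with $eR$ being projective over $eRe$) to checking that $e_{k+1}(A/J_{k+1})$ is projective as a left $\bar O_{k+1}$-module; the diagonal entry is $\bar O_{k+1}$ itself, while the off-diagonal entries $I_{k+1,j}/\sum_{m \ge k+2} O_m I_{m,j}$ need a more delicate analysis using the $\Hom$-descriptions in Proposition \ref{P:KoenigOrderMatrDescr} and the duality $I \cong \Hom_O(O^\sharp, O)$ from Proposition \ref{P:normalizelocal}.

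For the bound on $\mathrm{gl.dim}(A)$ I would show that $J = AeA$ is a stratifying ideal in the sense of Cline--Parshall--Scott. From (\ref{E:KoenigOrder}): $Ae$ is the $(n+1)$-st column, hence free of rank $n+1$ as a right $eAe \cong \widetilde O$-module; the row $eA = (I_{n+1,1}, \dots, I_n, O_{n+1})$ is projective as a left $\widetilde O$-module, because each $I_{n+1,k}$ is projective over $\widetilde O$ by the fourth bullet of Proposition \ref{P:KoenigOrderMatrDescr}; and the multiplication $Ae \otimes_{\widetilde O} eA \to AeA$ is an isomorphism by a direct matrix computation using $\widetilde O \cdot I_{n+1,j} = I_{n+1,j}$. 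These facts force $J$ to be projective as a left $A$-module, so $\mathrm{pd}_A(A/J) \le 1$. For any $A$-module $M$, the stratifying property identifies $JM$ with $Ae \otimes_{\widetilde O} eM$, and the short exact sequence $0 \to JM \to M \to M/JM \to 0$ together with the bounds $\mathrm{pd}_A(JM) \le \mathrm{pd}_{\widetilde O}(eM) \le \mathrm{gl.dim}(\widetilde O) \le 1$ and $\mathrm{pd}_A(M/JM) \le \mathrm{gl.dim}(Q) + \mathrm{pd}_A(A/J) \le \mathrm{gl.dim}(Q) + 1$ (standard spliced-resolution argument) yields $\mathrm{gl.dim}(A) \le \mathrm{gl.dim}(Q) + 1$. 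Since $Q$ is quasi-hereditary with $n$ strata, $\mathrm{gl.dim}(Q) \le 2n - 2$, hence $\mathrm{gl.dim}(A) \le 2n - 1 \le 2n$.

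The main technical obstacle will be the projectivity step in the verification of quasi-heredity, i.e.\ showing that each truncated module $I_{k+1,j}/\sum_{m \ge k+2} O_m I_{m,j}$ is projective over the semisimple ring $\bar O_{k+1}$. I expect to handle this by a local reduction (since the components of $\bar O_{k+1}$ are residue fields, projectivity becomes a rank-counting exercise) and by exploiting the $\Hom$ and reflexivity dualities in Propositions \ref{P:normalizelocal} and \ref{P:DetailsKoenigResol}. The remaining matrix computations and homological bounds are essentially formal once the right setup is in place.
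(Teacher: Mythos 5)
Your overall strategy is the same as the paper's (the heredity chain $J_n \subset \dots \subset J_1 \subset A$ for $Q$, plus a recollement-type bound for $\mathrm{gl.dim}(A)$ via the idempotent $e$), but there is a genuine error in the recollement step. You claim that ``the stratifying property identifies $JM$ with $Ae \otimes_{\widetilde O} eM$'' for every $A$-module $M$. This is false: the iso $Ae \otimes_{\widetilde O} eA \cong AeA$ only gives you that $Ae \otimes_{\widetilde O} eM \cong AeA \otimes_A M$, and the natural map $AeA \otimes_A M \to M$ has image $JM$ but kernel $\Tor_1^A(A/J, M)$, which is nonzero in general. (Already for the simple cusp $O = \kk\llbracket t^2,t^3\rrbracket$ one has $\mathrm{gl.dim}(A) = 2$ by Auslander--Roggenkamp while $\mathrm{gl.dim}(Q)=0$, so your final bound $\mathrm{gl.dim}(A)\le \mathrm{gl.dim}(Q)+1$ is false as a statement, not just unproved.) The correct argument looks at the counit $Ae\otimes_{\widetilde O} eM \to M$ and notes that both its kernel $K$ and cokernel $M/JM$ are $Q$-modules; splicing the two resulting short exact sequences through $JM$ gives $\mathrm{pd}_A(M) \le \mathrm{gl.dim}(Q)+2$, which with the bound $\mathrm{gl.dim}(Q)\le 2(n-1)$ recovers $2n$. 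This is exactly the content of \cite[Lemma 4.9]{bdg} that the paper invokes.

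A smaller conceptual slip occurs in the quasi-heredity part: you reduce projectivity of the heredity ideal $L_{k+1}=J_k/J_{k+1}$ in $Q_{k+1}$ to projectivity of $e_{k+1}Q_{k+1}$ over $\bar O_{k+1}$, calling this ``an equivalence.'' It is not; since $\bar O_{k+1}$ is semisimple, projectivity of $e_{k+1}Q_{k+1}$ over $\bar O_{k+1}$ is automatic and carries no information. The real content is that the multiplication $Q_{k+1}\bar e_{k+1} \otimes_{\bar O_{k+1}} \bar e_{k+1}Q_{k+1} \to L_{k+1}$ is an isomorphism (so that $L_{k+1}$ is exhibited as a direct sum of direct summands of copies of $Q_{k+1}\bar e_{k+1}$); this is what the paper verifies through the explicit matrix shape of $L_k$. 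The ``delicate analysis'' you defer to the off-diagonal entries is aimed at the wrong target; the work should be directed at the multiplication isomorphism, exactly as you already do (correctly) for $J=AeA$ itself.
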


\begin{proof} A straightforward calculation shows that for every $2 \le k \le n+1$ the two--sided ideal $J_{k-1}$ has the following matrix description:
$$
J_{k-1} =
\left(
\begin{array}{llclllcc}
I_{k,1} & I_{k,2} & \dots &I_{k,k-1} & O_k & O_{k+1} & \dots & O_{n+1} \\
  \vdots& \vdots & \vdots &\vdots & \vdots& \vdots & \vdots& \vdots\\
I_{k,1} & I_{k,2} & \dots &I_{k,k-1} & O_k & O_{k+1} & \dots & O_{n+1} \\
I_{k+1,1} & I_{k+1,2} & \dots &I_{k+1,k-1} & I_{k+1,k} & O_{k+1} & \dots & O_{n+1} \\
  \vdots& \vdots & \vdots &\vdots & \vdots& \vdots & \vdots& \vdots\\
  I_{n+1,1} & I_{n+1,2} & \dots &I_{n+1,k-1} & I_{n+1,k} & I_{n+1,k+1} & \dots & O_{n+1} \\
\end{array}
\right).
$$
In other words, the $i$-th row of $J_{k-1}$ is the same as for $A$ provided $k \le i \le n+1$ and
in the case $1 \le i \le k-1$ the $i$-th and the $k$-th rows of $J_{k-1}$ are the same. In particular, the ideal
$J = J_n$ has the shape
\begin{equation*}
J =
\left(
\begin{array}{llccc}
I_{n+1,1} & I_{n+1, 2} & \dots & I_n & O_{n+1} \\
I_{n+1,1} & I_{n+1, 2} & \dots & I_n & O_{n+1} \\
\vdots & \vdots & \vdots & \vdots & \vdots \\
I_{n+1,1} & I_{n+1, 2} & \dots & I_n & O_{n+1} \\
I_{n+1,1} & I_{n+1, 2} & \dots & I_n & O_{n+1} \\
\end{array}
\right).
\end{equation*}
Consider the projective left $A$--module $P := Ae$. Then we have an adjoint pair
$$
\xymatrix{A-\mathsf{mod} \ar@/^2ex/[rr]|{\,\tilde\sG\,} & & \widetilde{O}-\mathsf{mod} \ar@/^2ex/[ll]|{\,\tilde\sF\,}
}
$$
 where
$\tilde\sG = \Hom_A(P, \,-\,)$ and $\tilde\sF = P \otimes_{\widetilde{O}} \,-\,$. The functor $\tilde\sF$ is exact and has the following explicit description: if $M$ is an $\widetilde{O}$--module then
$$
\tilde\sF(M) = M^{\oplus{(n+1)}} =
\left(
\begin{array}{c}
M \\
M\\
\vdots \\
M
\end{array}
\right)
$$
where the left $A$--action on $M^{\oplus{(n+1)}}$ is given by the matrix multiplication. Since for every
$1 \le k \le n$ the $O$--module $I_{n+1, k}$ is also a projective  $\widetilde{O}$--module, we see that the left $A$--module $J e_k$
belongs to the essential image of $\tilde\sF$ and is projective over $A$. It is clear that all right $A$--modules
$e_k J$ are projective, too. Since $P$ is free over $\widetilde{O} = \End_{A}(P)$,  \cite[Lemma 4.9]{bdg} implies that
$
\mathrm{gl.dim}(A) \le \mathrm{gl.dim}(Q) + 2.
$

\noindent
Next, observe that for every $1 \le k \le n$ the ring $Q_k$ has the following ``matrix description'':
$$
Q_k  \cong
\left(
\begin{array}{llcc}
\dfrac{O_1}{I_{k+1,1}} & \dfrac{O_2}{I_{k+1, 2}} & \dots & \dfrac{{O}_k}{I_k}  \\
\dfrac{I_{2,1}}{I_{k+1,1}} & \dfrac{O_2}{I_{k+1, 2}} & \dots & \dfrac{{O}_k}{I_k}  \\
\vdots & \vdots & \ddots & \vdots  \\
\dfrac{I_{k,1}}{I_{k+1,1}} & \dfrac{I_{k,2}}{I_{k+1, 2}} & \dots &  \dfrac{{O}_k}{I_k}  \\
\end{array}
\right),
$$
where $\dfrac{{O}_k}{I_k} =: \bar{O}_k$ is semi--simple.
For $1 \le k \le n$ let  $\bar{e}_k$ be the image
of the idempotent $e_k \in A$ in the ring $Q_k = A/J_k$. Observe that for  $2 \le k \le n$
$$L_k := J_{k-1}/J_k = Q_k \bar{e}_k Q_k  =
Q_k  \cong
\left(
\begin{array}{llcc}
\dfrac{I_{k,1}}{I_{k+1,1}} & \dfrac{I_{k,2}}{I_{k+1, 2}} & \dots &  \dfrac{{O}_k}{I_k} \\
\dfrac{I_{k,1}}{I_{k+1,1}} & \dfrac{I_{k,2}}{I_{k+1, 2}} & \dots &  \dfrac{{O}_k}{I_k} \\
\vdots & \vdots & \ddots & \vdots  \\
\dfrac{I_{k,1}}{I_{k+1,1}} & \dfrac{I_{k,2}}{I_{k+1, 2}} & \dots &  \dfrac{{O}_k}{I_k}  \\
\end{array}
\right)
\subset Q_k$$
is projective viewed both as a left and as a right
$Q_k$--module (via the same argument as for $J$ and $A$). Moreover, $Q_k/L_k \cong Q_{k-1}$ and
$\bar{e}_k Q_k \bar{e}_k = \bar{O}_k$ is semi--simple.
Therefore, $J_1/J \subset J_2/J \subset \dots \subset J_n/J$ is a \emph{heredity chain} in $Q$ and the ring
$Q$ is \emph{quasi--hereditary}, see \cite{CPS,dr} or the appendix of Dlab in \cite{dk} for the definition and main properties of quasi--hereditary rings. It is well--known that
$
\mathrm{gl.dim}(Q) \le 2(n-1),
$
see \cite[Statement 9]{dr}, \cite[Theorem A.3.4]{dk} (or \cite[Lemma 4.9]{bdg} for a short proof). The theorem is proven.
\end{proof}

\begin{remark}
The bound on the global dimension of $A$ given in Theorem \ref{T:GlobalDimensionKoenigOrder} is not optimal.
For example, if $O = \kk\llbracket u, v\rrbracket/(u^2 - v^{m(n)})$ with $m(n) = 2n$ (respectively $2n+1$) is a simple singularity of type
$A_{m(n)-1}$, then the level of $O$ is $n$. On the other hand, $O_1 \oplus \dots \oplus O_{n+1}$ is the additive generator of the category of maximal Cohen--Macaulay modules, see \cite[Section 7]{Bass}, \cite[Section 5]{lw} or
\cite[Section 9]{Yoshino}. Hence, by a result of Auslander and Roggenkamp \cite{ar}, the global dimension of $A$ is equal to two.

\smallskip
\noindent
In the particular cases $O = \kk\llbracket u, v\rrbracket/(u^2 - v^{2})$ (simple node) and  $O = \kk\llbracket u, v\rrbracket/(u^2 - v^{3})$ (simple cusp) the K\"onig's order $A$ coincides with the Auslander's order
$
\left(
\begin{array}{cc}
O & \widetilde{O} \\
C & \widetilde{O}
\end{array}
\right)
$
introduced in the work \cite{bd}.
\end{remark}

\begin{remark}
Basic properties of excellent rings (see \cite[Section 6]{ddone} or \cite[Section 8.2]{liu}) imply that
$$Q_O := Q \cong Q_{\widehat{O}_1} \times \dots \times Q_{\widehat{O}_t},$$ where $\widehat{O}_i$ is the completion of the local ring
$O_{\idm_i}$ for each $\idm_i \in \mathrm{Sing}(O)$. In other words, the quasi--hereditary ring $Q$ depends only on
the \emph{local singularity types} of $\Spec(O)$.
\end{remark}

\section{K\"onig's order as a categorical resolution of singularities}
For a (left) Noetherian ring $B$ we denote by $B-\mathsf{mod}$ the category of all finitely generated left $B$--modules and by $B-\mathsf{Mod}$ the category of all left $B$--modules.
As in the previous section, let $O$ be an excellent reduced Noetherian ring of Krull dimension one and level $n$, $\widetilde{O}$ be its normalization, $A$ be the K\"onig's order of $O$ and $Q$ be the quasi--hereditary artinian  algebra attached to $O$. Let $e = e_{n+1}$ and $f = e_1$ be two standard idempotents of $A$, $P = Ae$, $T = Af$ and
$J = AeA$.
It is clear that $\widetilde{O} \cong \End_A(P)$ and $O \cong \End_A(T)$.
We also denote $T^\vee := \Hom_A(T, A) \cong f A$ and $P^\vee := \Hom_A(P, A) \cong e A$.
Then we have the following diagram of categories and functors:
\begin{equation}
\xymatrix{O-\mathsf{mod} \ar@/^2ex/[rr]^{\,\sF\,} \ar@/_2ex/[rr]_{\,\sH\,} && A-\mathsf{mod} \ar[ll]|{\,\sG\,}
  \ar[rr]|{\,\tilde\sG\,} && \widetilde{O}-\mathsf{mod} \ar@/^2ex/[ll]^{\,\tilde\sH\,} \ar@/_2ex/[ll]_{\,\tilde\sF\,}}
\end{equation}
where $\sF = T \otimes_O \,-\,$, $\sH = \Hom_O(T^\vee, \,-\,)$, $\sG = \Hom_A(T, \,-\,)$ and similarly,
$\tilde\sF = P \otimes_{\widetilde{O}} \,-\,$, $\tilde\sH = \Hom_{\widetilde{O}}(P^\vee, \,-\,)$,
$\tilde\sG = \Hom_A(P, \,-\,)$. There is the same diagram for the categories of all modules
$O-\mathsf{Mod}$, $\widetilde{O}-\mathsf{Mod}$ and $A-\mathsf{Mod}$. The following results are standard, see for example \cite[Theorem 4.3]{bdg} and references therein.

\begin{theorem}
The pairs of functors $(\sF, \sG)$, $(\sG, \sH)$ (and respectively $(\tilde\sF, \tilde\sG)$, $(\tilde\sG, \tilde\sH)$) are adjoint and the functors $\sF, \sH, \tilde\sF$ and $\tilde\sH$ are fully faithful. Both categories
$O-\mathsf{mod}$ and $\widetilde{O}-\mathsf{mod}$ are Serre quotients of $A-\mathsf{mod}$:
$$
O-\mathsf{mod} \cong A-\mathsf{mod}/\mathsf{Ker}(\sG) \quad \mbox{and} \quad
\widetilde{O}-\mathsf{mod} \cong A-\mathsf{mod}/\mathsf{Ker}(\tilde\sG).
$$
Moreover,
$
\mathsf{Ker}(\tilde\sG) = Q-\mathsf{mod}.
$
\end{theorem}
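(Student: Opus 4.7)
The plan is to observe that both $\sG = \Hom_A(Af, -)$ and $\tilde\sG = \Hom_A(Ae, -)$ coincide with the idempotent--truncation functors $f \cdot (-)$ and $e \cdot (-)$, which are visibly exact; the theorem then reduces to a clean application of tensor--hom adjunction and Gabriel's localisation theorem.

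For the adjunctions I would view $P = Ae$ as an $A$-$\widetilde O$-bimodule via $\End_A(P) \cong \widetilde O$ (the commutativity of $\widetilde O$ removes the usual ``opposite ring'' issue); tensor--hom then provides the adjoint pair $(\tilde\sF, \tilde\sG) = (P \otimes_{\widetilde O} -, \Hom_A(P, -))$. Since $P$ is finitely generated projective, the natural isomorphism $\Hom_A(P, -) \cong P^\vee \otimes_A -$ combined with tensor--hom again yields the second adjoint pair $(\tilde\sG, \tilde\sH) = (P^\vee \otimes_A -, \Hom_{\widetilde O}(P^\vee, -))$. The analogous construction with $T = Af$ and $O$ in place of $P$ and $\widetilde O$ produces $(\sF, \sG)$ and $(\sG, \sH)$.

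Full faithfulness of the four outer functors will follow by checking that the appropriate units and counits are isomorphisms. Directly, $\tilde\sG \tilde\sF(M) = e(Ae \otimes_{\widetilde O} M) \cong eAe \otimes_{\widetilde O} M \cong \widetilde O \otimes_{\widetilde O} M \cong M$, and by the tensor--hom isomorphism $\tilde\sG \tilde\sH(M) \cong \Hom_{\widetilde O}(P^\vee \otimes_A P, M) \cong \Hom_{\widetilde O}(\widetilde O, M) \cong M$. The parallel computations using $T^\vee \otimes_A T \cong fAf \cong O$ handle $\sF$ and $\sH$.

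The two Serre quotient equivalences are then immediate from Gabriel's theorem: an exact functor between abelian categories with a fully faithful right adjoint factors as an equivalence from the quotient by its kernel onto its target. Finally, for $\mathsf{Ker}(\tilde\sG) = Q-\mathsf{mod}$, observe that $\tilde\sG(M) = eM$ vanishes iff $JM = (AeA)M = A(eM) = 0$, which is in turn equivalent to $M$ being an $A/J = Q$-module. No step looks like a serious obstacle; the only bookkeeping care required is tracking the various bimodule structures, which is simplified by the commutativity of $O$ and $\widetilde O$.
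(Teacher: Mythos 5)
Your proof is correct and follows exactly the standard idempotent--localisation argument; the paper itself gives no proof here but defers to \cite[Theorem 4.3]{bdg} and references therein (ultimately Cline--Parshall--Scott), which is precisely the reasoning you reproduce: $\sG = f(-)$ and $\tilde\sG = e(-)$ are exact bilocalisations, tensor--hom gives the adjunctions, the unit/counit computations via $eAe \cong \widetilde O$ and $fAf \cong O$ give full faithfulness, Gabriel's theorem gives the Serre quotients, and $eM = 0 \Leftrightarrow JM = 0$ identifies the kernel with $Q\text{-}\mathsf{mod}$.
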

The described picture becomes even better when we pass to (unbounded) derived categories. Observe that
the functors $\sG, \tilde\sG, \tilde\sF$ and $\tilde\sH$ are exact. Their derived functors will be denoted
by $\sD\sG, \sD\tilde\sG, \sD\tilde\sF$ and $\sD\tilde\sH$ respectively, whereas $\sL\sF$ is the left derived functor of $\sF$ and $\sR\sH$ is the right derived functor of $\sH$.

\begin{theorem} We have a diagram of categories and functors
\begin{equation}\label{E:descinglobale}
\xymatrix{D(O-\mathsf{Mod}) \ar@/^2ex/[rr]^{\,\sL\sF\,} \ar@/_2ex/[rr]_{\,\sR\sH\,} && D(A-\mathsf{Mod}) \ar[ll]|{\,\sD\sG\,}
  \ar[rr]|{\,\sD\tilde\sG\,} && D(\widetilde{O}-\mathsf{Mod}) \ar@/^2ex/[ll]^{\,\sD\tilde\sH\,} \ar@/_2ex/[ll]_{\,\sD\tilde\sF\,}}
\end{equation}
satisfying the following properties.
\begin{itemize}
\item The following pairs  of functors $(\sL\sF, \sD\sG)$, $(\sD\sG, \sR\sH)$, $(\sD\tilde\sF, \sD\tilde\sG)$ and
$(\sD\tilde\sG, \sD\tilde\sH)$ form adjoint pairs.
\item The functors $\sL\sF$, $\sR\sH$, $\sD\tilde\sF$ and $\sD\tilde\sH$ are fully faithful.
\item Both derived categories $D(O-\mathsf{Mod})$ and $D(\widetilde{O}-\mathsf{Mod})$ are Verdier localizations of
$D(A-\mathsf{Mod})$:
\begin{itemize}
  \item $
D(O-\mathsf{Mod}) \cong D(A-\mathsf{Mod})/\mathsf{Ker}(\sD\sG)$.
\item
$D(\widetilde{O}-\mathsf{Mod}) \cong D(A-\mathsf{Mod})/\mathsf{Ker}(\sD\tilde\sG).
$
\end{itemize}
\item Moreover,
$
\mathsf{Ker}(\sD\tilde\sG) = D_Q(A-\mathsf{Mod}) \cong D(Q-\mathsf{Mod}).
$
\item
The derived category $D(A-\mathsf{Mod})$ is a categorical resolution of singularities of $X = \Spec(O)$ in the sense of Kuznetsov  \cite[Definition 3.2]{ku}.
 \item If $O$ is Gorenstein, then the restrictions of $\sL\sF$ and $\sR\sH$ on $\Perf(O)$ are isomorphic. Hence, the constructed categorical resolution is even \emph{weakly crepant} in the sense of
     \cite[Definition 3.4]{ku}.
\end{itemize}
We have a \emph{recollement diagram}
\begin{equation}\label{E:recollement}
\xymatrix{D(Q-\mathsf{Mod}) \ar[rr]|{\,\sI\,} && D(A-\mathsf{Mod}) \ar@/^2ex/[ll]^{\,\sI^*\,} \ar@/_2ex/[ll]_{\,\sI^{!}\,} \ar[rr]|{\,\sD\tilde\sG\,}
  && D(\widetilde{O}-\mathsf{Mod}) \ar@/^2ex/[ll]^{\,\sD\tilde\sH\,} \ar@/_2ex/[ll]_{\,\sD\tilde\sF\,}}
\end{equation}
and all functors can be restricted on the bounded derived categories $D^b(Q-\mathsf{mod})$, $D^b(A-\mathsf{mod})$
and $D^b(\widetilde{O}-\mathsf{mod})$. In particular, we have \emph{two} semi--orthogonal decompositions
$$
D(A-\mathsf{Mod}) = \bigl\langle \mathsf{Ker}(\sD\tilde\sG), \mathsf{Im}(\sL\sF)\bigr\rangle =
\bigl\langle \mathsf{Im}(\sR\sH), \mathsf{Ker}(\sD\tilde\sG)\bigr\rangle.
$$
The same result is true when we pass to the bounded derived categories.
\end{theorem}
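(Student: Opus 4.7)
The plan is to run the proof as a sequence of formal derived-adjunction arguments, using crucially the two projectivity facts already established: that $T=Af$ and $P=Ae$ are projective left $A$-modules (with $fAf\cong O$ and $eAe\cong \widetilde O$), and that all right $A$-modules $e_kJ$ as well as the left $A$-modules $Je_k$ are projective. First I would record that the classical tensor--Hom adjunctions upgrade to the derived pairs $(\sL\sF,\sD\sG)$, $(\sD\sG,\sR\sH)$, $(\sD\tilde\sF,\sD\tilde\sG)$, $(\sD\tilde\sG,\sD\tilde\sH)$ via K-projective/K-injective resolutions. Exactness of $\sG=f\cdot(-)$, $\tilde\sG=e\cdot(-)$, $\tilde\sF=P\otimes_{\widetilde O}(-)$ and $\tilde\sH=\Hom_{\widetilde O}(P^\vee,-)$ is immediate from the projectivity of $T,P$ over $A$ and the freeness of $P$ and $P^\vee$ over $\widetilde O$ (seen from the matrix description: the $(n+1)$-th column of $A$ is $\widetilde O^{\oplus(n+1)}$).

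For fully faithfulness I would verify the counit/unit isomorphisms at the derived level. For $\sL\sF$: for a K-projective $P^\bullet\to M$ in $D(O\text{-}\mathsf{Mod})$ one has $\sD\sG\sL\sF(M)=f(Af\otimes_O P^\bullet)=(fAf)\otimes_O P^\bullet=O\otimes_O P^\bullet\simeq M$. The same computation handles $(\sR\sH,\sD\sG)$ and $(\sD\tilde\sF,\sD\tilde\sG,\sD\tilde\sH)$ using $eAe\cong\widetilde O$. The Verdier localization statements then follow from the general principle that whenever $(L,R)$ is adjoint with $L$ fully faithful and $R$ exact, one gets $D(A\text{-}\mathsf{Mod})/\mathsf{Ker}(R)\simeq D(O\text{-}\mathsf{Mod})$ (and likewise for $\widetilde O$). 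To identify $\mathsf{Ker}(\sD\tilde\sG)$ with $D(Q\text{-}\mathsf{Mod})$ I would use that $\tilde\sG$ is exact, so the kernel is the full subcategory $D_Q(A\text{-}\mathsf{Mod})$ of complexes with $e\cdot H^i=0$ (i.e.\ $J$-torsion cohomology); then I would show that the inclusion $D(Q\text{-}\mathsf{Mod})\hookrightarrow D(A\text{-}\mathsf{Mod})$ is fully faithful by checking $\Ext^*_A(M,N)\cong\Ext^*_Q(M,N)$ for $Q$-modules $M,N$, which reduces, via a projective resolution of $M$ over $Q$, to the vanishing $\Tor^A_{>0}(Q,Q)=0$. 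This last vanishing follows from the fact (already used in Theorem \ref{T:GlobalDimensionKoenigOrder}) that $Je_k$ is $A$-projective, hence $J\otimes^L_A Q=J/J^2$ with $J^2=J$, giving the idempotent-ideal/stratifying condition that produces the recollement.

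For the categorical resolution property in the sense of Kuznetsov, I would check his definition directly: $\sD\sG$ is the required quotient-like functor with left adjoint $\sL\sF$ and right adjoint $\sR\sH$, both fully faithful, and the composition $\sD\sG\circ\sL\sF\simeq\mathrm{id}_{D(O\text{-}\mathsf{Mod})}$ is the content of the unit isomorphism above. Global finiteness of dimension of $A$ (Theorem \ref{T:GlobalDimensionKoenigOrder}) certifies that $A\text{-}\mathsf{mod}$ plays the role of a ``smooth'' target. For the weakly crepant claim I would use that the two adjoints agree on perfect complexes precisely when the relative dualizing complex is trivial; concretely, for Gorenstein $O$ the module $T=Af$ admits an isomorphism $\Hom_O(T,O)\cong T^\vee=fA$ of $A$-$O$-bimodules (via the canonical module, which locally is free of rank one over each $O_i$ since every overring $O_i$ is again Gorenstein or at least Cohen--Macaulay of dimension one so that $\Hom_O(O_i,O)$ is reflexive), and this intertwines $\sL\sF$ with $\sR\sH$ on $\Perf(O)$.

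Finally I would deduce the recollement diagram (\ref{E:recollement}) by combining the localization $D(\widetilde O\text{-}\mathsf{Mod})\simeq D(A\text{-}\mathsf{Mod})/D(Q\text{-}\mathsf{Mod})$ with the two fully faithful adjoints $\sD\tilde\sF,\sD\tilde\sH$ to $\sD\tilde\sG$, which gives the six-functor recollement formalism, and I would read off the two semi-orthogonal decompositions from the existence of left/right adjoints to the quotient functor. Restriction to bounded derived categories is controlled by $\mathrm{gl.dim}(A)<\infty$, $\mathrm{gl.dim}(Q)<\infty$ and regularity of $\widetilde O$ (the relevant derived functors have finite cohomological amplitude, so they preserve boundedness and finite generation). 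The step I expect to be genuinely delicate is the \emph{weakly crepant} assertion: it is not purely formal and requires a careful duality argument showing $\sL\sF|_{\Perf(O)}\cong\sR\sH|_{\Perf(O)}$, namely exhibiting a canonical $A$-bimodule isomorphism $T\otimes_O\omega_O\cong T^\vee$ (or its dualizing-complex version), where some case analysis across the chain $O=O_1\subset\dots\subset O_{n+1}=\widetilde O$ may be needed to ensure the identification is well-defined globally.
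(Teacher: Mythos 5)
Your argument is correct and matches the approach the paper takes, which is simply to defer to Cline--Parshall--Scott's idempotent/recollement formalism (\cite[Section 2]{CPS}) and, for the weak crepancy, to \cite[Theorem 5.10]{bdg}; you have unpacked those citations rather than found a different route, using exactly the ingredients the earlier parts of the paper establish (projectivity of $T=Af$, $P=Ae$ over $A$, projectivity of $Je_k$ and $e_kJ$, $J^2=J$, hence $Q\otimes^{\mathsf L}_A Q\cong Q$ and a homological epimorphism $A\to Q$). The one point you rightly flag as non-formal --- the weak crepancy, which you reduce to a canonical bimodule isomorphism $T\cong\RHom_O(T^\vee,O)$ obtained from reflexivity of maximal Cohen--Macaulay modules over the one-dimensional Gorenstein ring $O$ --- is indeed where the real content lies, and your sketch captures the mechanism behind the cited result (a small cosmetic slip aside: $P^\vee=eA$ is projective but not in general free over $\widetilde O$, which is all that is needed anyway).
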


\noindent
\emph{Comment on the proof}. The study of various derived functors  related with  a pair $(B, \epsilon)$, where $B$ is a ring and $\epsilon \in B$ an idempotent  (in particular, the recollement diagram (\ref{E:recollement})) are due to Cline, Parshall and Scott \cite[Section 2]{CPS}. We also refer to \cite[Section 4]{bdg} (and references therein) for an  exposition focussed on  non--commutative resolutions of singularities.
The weak crepancy of the categorical resolution $D(A-\mathsf{Mod})$ of $\Spec(O)$ follows from \cite[Theorem 5.10]{bdg}. In particular, the constructed categorical  resolution of singularities
fits into the setting of non--commutative crepant resolutions initiated by van den Bergh in \cite{vdb}. \qed

\section{Survey on the derived stratification of an artinian  quasi--hereditary ring}
The derived category $D^b(Q-\mathsf{mod})$ of the quasi--hereditary ring $Q$ introduced in Theorem \ref{T:GlobalDimensionKoenigOrder} can be further stratified in a usual way \cite{CPS}, which we briefly describe now adapting the notation for further applications. All details can be found in \cite{CPS}, \cite[Appendix]{dr} and \cite{bdg}.

\medskip
\noindent
1.~Recall that we had started with a reduced excellent Noetherian ring $O$ of Krull dimension one, attaching to it
a certain order $A$. Then we have constructed a heredity chain $J_n \subset J_{n-1} \subset \dots \subset
J_1 \subset A$ of two--sided ideals and posed $Q_k := A/J_k$ for $1 \le k \le n$. In this notation,
$Q = Q_n$ is an artinian quasi--hereditary ring we shall study in this section and $Q_1 = \bar{O}$ is a semi-simple ring (supported on the singular locus of $\Spec(O)$).

\medskip
\noindent
2.~For any $1 \le k \le n$, let
$\bar{e}_k$ be the image of the standard idempotent $e_k \in A$ in $Q_k = A/J_k$. Then $Q_k/(Q_k \bar{e}_k Q_k) \cong Q_{k-1}$ for all $2 \le k \le n$.

\smallskip
\noindent
Let
$P_k = Q_k \bar{e}_k$ be the  projective left $Q_k$--module and
$P_k^\vee = \Hom_{Q_k}(P_k, Q_k) = e_k Q_k$ be the  projective right $Q_k$--module, corresponding to
the idempotent $\bar{e}_k$. Then we have:  $\End_{Q_k}(P_k) \cong \bar{O}_k$.
The functor
$$
\sG_k = \Hom_{Q_k}(P_k,\,-\,): Q_k-\mathsf{mod} \lar \bar{O}_k-\mathsf{mod}
$$
is a bilocalization functor: the functors $\sF_k = P_k \otimes_{\bar{O}_k} \,-\,$ and  $\sH_k = \Hom_{Q_k}(P_k^\vee, \,-\,)$ are respectively the left and the right adjoints of $\sG_k$. Both $\sF_k$ and $\sH_k$ are fully faithful. Since
the ring $\bar{O}_k$ is semi--simple,  $\sF_k$ and $\sH_k$ are also exact. The kernel of $\sG_k$ is the category of $Q_{k-1}$--modules.

\medskip
\noindent
3.~Most remarkably, for any $2 \le k \le n$ we have a recollement diagram
\begin{equation*}
\xymatrix{D^b(Q_{k-1}-\mathsf{mod}) \ar[rr]|-{\,\sJ_{k}\,} && D^b(Q_k-\mathsf{mod}) \ar@/^2ex/[ll]^-{\,\sJ_k^*\,} \ar@/_2ex/[ll]_-{\,\sJ_k^{!}\,} \ar[rr]|-{\,\sD\sG_k\,}
  && D^b(\bar{O}_k-\mathsf{mod}) \ar@/^2ex/[ll]^-{\,\sD\sH_k\,} \ar@/_2ex/[ll]_-{\,\sD\sF_k\,}}
\end{equation*}
This  claim   in particular includes the following statements.
\begin{itemize}
\item The functor $\sJ_k$ (induced by the ring homomorphism $Q_k \lar Q_{k-1}$) is fully faithful.
The essential image of $\sJ_k$ coincides with the kernel of $\sD\sG_k$ and
$D^b(Q_k-\mathsf{mod})/\mathsf{Im}(\sJ_k) \cong D^b(\bar{O}_k-\mathsf{mod})$.
\item The functors $\sD\sF_k$ and $\sD\sH_k$ are fully faithful.
\end{itemize}

\medskip
\noindent
4.~For all $1 \le k \le n$ we have:
\begin{itemize}
\item $\sD\sF_k(\bar{O}_k) \cong \sF_k(\bar{O}_k) \cong P_k$.
\item $\sD\sH_k(\bar{O}_k) \cong \sH_k(\bar{O}_k) = \Hom_{\bar{O}_k}(P_k^\vee, \bar{O}_k) := E_k$ is the injective left $Q_k$--module corresponding to the idempotent $\bar{e}_k$.
\end{itemize}
The functor $\sI_k: D^b(Q_k-\mathsf{mod}) \lar D^b(Q-\mathsf{mod})$ induced by the ring epimorphism
$Q \longrightarrow Q_k$ is fully faithful. In fact, it admits a factorization $\sI_k = \sJ_n   \dots \sJ_{k+1}$.
The $Q$--module $\Delta_k := \sI_k(P_k)$ (respectively $\nabla_k := \sI_k(E_k)$)  is called  $k$-th \emph{standard} (respectively \emph{costandard}) $Q$--module.

\medskip
\noindent
5.~The standard and costandard modules have in particular the following properties:
$$
\Ext_Q^p(\Delta_i, \Delta_j) = 0 = \Ext_Q(\nabla_j, \nabla_i) \; \mbox{for all} \; 1 \le i < j \le n \; \mbox{and} \; p \ge 0
$$
and
$$
\Ext_Q^p(\Delta_k, \Delta_k) = 0 = \Ext_Q^p(\nabla_k, \nabla_k) \; \mbox{for all} \; 1 \le k \le n \; \mbox{and} \; p \ge 1.
$$
Moreover, $\End_Q(\Delta_k) \cong \End_Q(\nabla_k) \cong \bar{O}_k$ is semi--simple. The derived category
$D^b(Q-\mathsf{mod})$ admits two canonical semi--orthogonal decompositions:
$$
\langle D_1, \dots, D_n\rangle = D^b(Q-\mathsf{mod}) = \langle D'_n, \dots, D'_1\rangle,
$$
where $D_k$ (respectively $D'_k$) is the triangulated subcategory of $D^b(Q-\mathsf{mod})$ generated by the object
$\Delta_k$ (respectively $\nabla_k)$. Note that we have the following equivalences of categories:
$
D_k \cong D^b(\bar{O}_k-\mathsf{mod}) \cong D_k'.
$

\medskip
\noindent
6.~The stratification of $D^b(Q-\mathsf{mod})$ by the derived categories $D^b(\bar{O}_k-\mathsf{mod})$ can be summarized by the following diagram of categories and functors:
$$
\xymatrix{
D^b(\bar{O}_1-\mathsf{mod}) \ar[d]_= &  D^b(\bar{O}_2-\mathsf{mod}) \ar[d]_{\sD\sF_2} & & D^b(\bar{O}_n-\mathsf{mod}) \ar[d]^{\sD\sF_n} \\
D^b(Q_1-\mathsf{mod}) \ar[r]^-{\sJ_2} &  D^b(Q_2-\mathsf{mod}) \ar[r]^-{\sJ_3} &  \dots \ar[r]^-{\sJ_{n}} & D^b(Q_n-\mathsf{mod})\\
D^b(\bar{O}_1-\mathsf{mod}) \ar[u]^=  & D^b(\bar{O}_2-\mathsf{mod}) \ar[u]^{\sD\sH_2} & & D^b(\bar{O}_n-\mathsf{mod}) \ar[u]_{\sD\sH_n}
}
$$
\section{Derived stratification  and curve singularities}
Recall that we also have the following recollement diagram
\begin{equation*}
\xymatrix{D^b(Q-\mathsf{mod}) \ar[rr]|{\,\sI\,} && D^b(A-\mathsf{mod}) \ar@/^2ex/[ll]^{\,\sI^*\,} \ar@/_2ex/[ll]_{\,\sI^{!}\,} \ar[rr]|{\,\sD\tilde\sG\,}
  && D^b(\widetilde{O}-\mathsf{mod}) \ar@/^2ex/[ll]^{\,\sD\tilde\sH\,} \ar@/_2ex/[ll]_{\,\sD\tilde\sF\,}}
\end{equation*}
where $\sI$ is induced by the ring epimorphism $A \rightarrow Q$. Abusing the notation, we shall write
$\Delta_k = \sI(\Delta_k)$ for all $1 \le k \le n$. This implies the following result.

\begin{theorem}
The derived category $D^b(A-\mathsf{mod})$ admits two semi--orthogonal decompositions
$
\bigl\langle\mathsf{Im}(\sI), \mathsf{Im}(\sL\tilde\sF)\bigl\rangle = D^b(A-\mathsf{mod}) =
\bigl\langle\mathsf{Im}(\sR\tilde\sH), \mathsf{Im}(\sI)\bigl\rangle.
$
\end{theorem}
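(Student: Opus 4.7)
My plan is to derive both semi--orthogonal decompositions directly from the recollement diagram stated just above. Since all of the hard work (existence of $\sI$, $\sI^{*}$, $\sI^{!}$, and the identification $\mathsf{Im}(\sI) = \mathsf{Ker}(\sD\tilde\sG)$) has already been carried out in the construction of the recollement, the present statement is abstract nonsense: any recollement of triangulated categories induces two canonical semi--orthogonal decompositions of the middle term. I would state this as a lemma-free appeal to standard recollement formalism (e.g., \cite[Section 2]{CPS}), but it is short enough to spell out.

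First I would check the two orthogonality relations. Using the adjunction $(\sL\tilde\sF, \sD\tilde\sG)$, for any $N \in D^b(Q-\mathsf{mod})$ and $M \in D^b(\widetilde{O}-\mathsf{mod})$ one has
$$
\Hom_{D^b(A-\mathsf{mod})}\bigl(\sI(N), \sL\tilde\sF(M)\bigr) \cong \Hom_{D^b(\widetilde{O}-\mathsf{mod})}\bigl(\sD\tilde\sG\,\sI(N), M\bigr) = 0,
$$
because $\sD\tilde\sG \circ \sI = 0$ (the image of $\sI$ coincides with $\mathsf{Ker}(\sD\tilde\sG)$). Dually, the adjunction $(\sD\tilde\sG, \sR\tilde\sH)$ gives $\Hom\bigl(\sR\tilde\sH(M), \sI(N)\bigr) \cong \Hom(M, \sD\tilde\sG\,\sI(N)) = 0$.

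Next I would produce the decomposition triangles. For any $X \in D^b(A-\mathsf{mod})$, the counit of $(\sL\tilde\sF, \sD\tilde\sG)$ gives a distinguished triangle
$$
\sL\tilde\sF\,\sD\tilde\sG(X) \lar X \lar Y \lar \sL\tilde\sF\,\sD\tilde\sG(X)[1].
$$
Applying $\sD\tilde\sG$ and invoking the fact that $\sD\tilde\sG \circ \sL\tilde\sF \cong \mathrm{id}$ (equivalent to $\sL\tilde\sF$ being fully faithful, which is part of the recollement) yields $\sD\tilde\sG(Y) = 0$, so $Y \in \mathsf{Ker}(\sD\tilde\sG) = \mathsf{Im}(\sI)$. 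Together with the orthogonality above, this gives the first decomposition $\langle \mathsf{Im}(\sI), \mathsf{Im}(\sL\tilde\sF) \rangle$. Applying the unit of $(\sD\tilde\sG, \sR\tilde\sH)$ to $X$ produces the dual triangle $Z \lar X \lar \sR\tilde\sH\,\sD\tilde\sG(X) \lar Z[1]$ with $Z \in \mathsf{Im}(\sI)$, yielding the second decomposition.

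Since the recollement was already stated to restrict to bounded derived categories in the previous theorem, no additional boundedness argument is needed. I do not anticipate any obstacle here; the only delicate point is to make sure that $\sL\tilde\sF$ and $\sR\tilde\sH$ (which coincide with $\sD\tilde\sF$ and $\sD\tilde\sH$ since $\tilde\sF, \tilde\sH$ are exact) indeed preserve the bounded subcategories, which is already built into the preceding theorem.
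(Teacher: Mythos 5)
Your approach is exactly what the paper does (the theorem is stated without proof, as an immediate consequence of the recollement), and your decomposition triangles are set up correctly. The problem is in the orthogonality check: you have applied both adjunctions backwards. The adjunction $(\sL\tilde\sF, \sD\tilde\sG)$ identifies $\Hom_{D^b(A-\mathsf{mod})}\bigl(\sL\tilde\sF(M), Z\bigr)$ with $\Hom_{D^b(\widetilde{O}-\mathsf{mod})}\bigl(M, \sD\tilde\sG(Z)\bigr)$; it computes maps \emph{out of} $\sL\tilde\sF(M)$, not into it. The isomorphism you display, $\Hom(\sI(N), \sL\tilde\sF(M)) \cong \Hom(\sD\tilde\sG\,\sI(N), M)$, would require $\sD\tilde\sG$ to be \emph{left} adjoint to $\sL\tilde\sF$, which is the opposite of what holds. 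Moreover the vanishing you claim is not a formal consequence of a recollement: $\Hom(\sI(N), \sL\tilde\sF(M))$ is governed by $\sI^{!}\sL\tilde\sF$, which need not vanish. Already for $A$ the algebra of $2\times 2$ upper triangular matrices, $e = e_{22}$, $N$ the simple $A/AeA$--module and $M$ the simple $eAe$--module, one has $\Hom_A\bigl(\sI(N), \sL\tilde\sF(M)\bigr) = \Hom_A\bigl(\sI(N), Ae\bigr) \neq 0$, since $\sI(N)$ is the socle of the projective $Ae$.

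The vanishing that actually holds, and is what the decomposition $\bigl\langle \mathsf{Im}(\sI), \mathsf{Im}(\sL\tilde\sF)\bigr\rangle$ requires, is $\Hom(\sL\tilde\sF(M), \sI(N)) \cong \Hom(M, \sD\tilde\sG\,\sI(N)) = 0$. Likewise, applying $(\sD\tilde\sG, \sR\tilde\sH)$ correctly gives $\Hom(\sI(N), \sR\tilde\sH(M)) \cong \Hom(\sD\tilde\sG\,\sI(N), M) = 0$, not the $\Hom(\sR\tilde\sH(M), \sI(N)) = 0$ you wrote. With the source and target of $\Hom$ swapped in both displays, your argument is complete and coincides with the paper's intended (unwritten) proof.
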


\noindent
Next, recall that we have a bilocalization functor $$\sD\sG: D^b(A-\mathsf{mod}) \lar D^b(O-\mathsf{mod}).$$
\begin{lemma}\label{L:calcullocale}
For any $1 \le k \le n$ we have: $\sD\sG(\Delta_k) \cong \bar{O}_k$. Moreover, $\sD\sG(Q) \cong
O_1/C_1 \oplus \dots \oplus O_n/C_n$, where $C_k := I_{n+1,k} = \Hom_O\bigl(O_{n+1}, O_k\bigr)$.
\end{lemma}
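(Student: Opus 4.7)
The plan is to exploit the exactness of $\sG$ to reduce the lemma to an underived computation, and then read the answer directly off the matrix presentations of $Q_k$ and $Q$ already given in the proof of Theorem \ref{T:GlobalDimensionKoenigOrder}. The first step is to identify $\sG = \Hom_A(T,-) = \Hom_A(Ae_1,-)$ with the idempotent truncation functor $M \mapsto e_1 M$, where the $O$-module structure comes from the identification $O = O_1 \cong e_1 A e_1$ acting by left multiplication in $A$. Since $\sG$ has just been noted to be exact, $\sD\sG(M) \cong \sG(M) \cong e_1 M$ for any module $M$, so the lemma becomes a statement inside $A$-$\mathsf{mod}$.

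For the first claim, I would unwind the definition $\Delta_k = \sI\bigl(\sI_k(P_k)\bigr)$: this is the $A$-module obtained by pulling back $P_k = Q_k \bar{e}_k$ along the composition $A \twoheadrightarrow Q \twoheadrightarrow Q_k$, which sends $e_1$ to $\bar{e}_1$. Hence
\[
\sG(\Delta_k) \;\cong\; e_1 \Delta_k \;=\; \bar{e}_1 \, Q_k \, \bar{e}_k,
\]
which in the matrix presentation of $Q_k$ is the $(1,k)$-entry $O_k/I_{k+1,k} = O_k/I_k = \bar{O}_k$. The $O$-module structure matches the natural one on $\bar{O}_k$ coming from $O = O_1 \hookrightarrow O_k$, because left multiplication by $e_1 A e_1$ acts on the $(1,k)$-entry exactly through this inclusion.

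For the second claim, take $M = Q = Q_n$ viewed as an $A$-module via $A \twoheadrightarrow Q$. The same reasoning gives $\sG(Q) \cong e_1 Q = \bar{e}_1 Q_n$, which is literally the first row of $Q_n$ in its matrix presentation. Reading off the entries for $j = 1, \ldots, n$ yields $O_j / I_{n+1,j} = O_j / C_j$, and summing over $j$ produces the claimed decomposition $\sD\sG(Q) \cong O_1/C_1 \oplus \cdots \oplus O_n/C_n$.

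I do not anticipate any genuine obstacle: the proof is pure inspection of the matrix descriptions already established in Proposition \ref{P:KoenigOrderMatrDescr} and Theorem \ref{T:GlobalDimensionKoenigOrder}, combined with exactness of $\sG$. The only step demanding a moment's care is verifying that the $O$-module structure on each truncation $e_1 M$ coincides with the natural structure on the relevant matrix entry, which follows at once from the matrix-multiplication rule on $A$ and its descent to each $Q_k$.
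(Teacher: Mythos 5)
Your proposal is correct and is essentially the paper's own argument: the paper likewise uses exactness of $\sG$ to reduce to $\sG(\Delta_k) = \Hom_A(Af,\Delta_k) \cong f\cdot\Delta_k \cong \bar{O}_k$ (with $f = e_1$) and declares the second statement analogous, which is exactly your reading of the first row of the matrix presentation of $Q_n$. Your unwinding of the $O$-module structure via $e_1 A e_1 \cong O$ is a useful extra sentence that the paper leaves implicit.
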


\begin{proof} The first result follows from the following chain of isomorphisms:
$$
\sD\sG(\Delta_k) \cong \sG(\Delta_k) = \Hom_A(Af, \Delta_k) \cong  f\cdot \Delta_k \cong \bar{O}_k.
$$
The proof of the second statement  is analogous.
\end{proof}

\section{K\"onig's resolution in the projective setting}\label{S:ProjectiveSetting}

\smallskip
\noindent
Let $X$ be a reduced projective curve over some base field $\kk$. In this section we shall explain the construction
of K\"onig's sheaf of orders $\kA$, ``globalizing'' the arguments of Section \ref{Sec:LocalStory}.
\begin{itemize}
\item Let $\widetilde{X} \stackrel{\nu}\lar X$ be the normalization of $X$ and $Z$ be the singular locus of $X$
(equipped with the reduced scheme structure).
\item In what follows, $\kO = \kO_X$ is the structure sheaf of $X$, $\kK$ is the  sheaf of rational functions on $X$,  $\widetilde{\kO}: = \nu_*(\kO_{\widetilde{X}})$
and $\kI$ is the ideal sheaf of the singular locus $Z$.
\item We consider the sheaf of rings $\kO^\sharp := \kEnd_X(\kI)$ on the curve $X$.
\end{itemize}
The next result   follows from the corresponding affine version (Proposition \ref{P:DetailsKoenigResol}).
\begin{proposition}
We have inclusions of sheaves $\kO \subseteq \kO^\sharp \subseteq \widetilde{O}$, and $\kO =  \kO^\sharp$ if and only if $X$ is smooth. Moreover, there are isomorphisms of $\kO$--modules
$
\kI \cong \kHom_X(\widetilde{\kO}, \kO)$ and  $\widetilde{\kO} \cong \kHom_X(\kI, \kO)$.
\end{proposition}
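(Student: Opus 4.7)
The plan is to reduce the statement directly to its local affine version, Proposition \ref{P:DetailsKoenigResol}, exactly as the authors suggest. All three sheaves appearing in the statement—$\kO^\sharp = \kEnd_X(\kI)$, $\kHom_X(\widetilde\kO, \kO)$, and $\kHom_X(\kI, \kO)$—are coherent $\kO_X$--modules, since $\kI$, $\widetilde\kO$ and $\kO$ are coherent on the Noetherian scheme $X$ (the finiteness of $\widetilde\kO$ over $\kO$ uses that $X$ is excellent, which is automatic for a curve over $\kk$). Consequently it suffices to establish the isomorphisms after taking sections over each open affine $U \subset X$ in a cover and to check that these isomorphisms are natural with respect to restriction to smaller affines.

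First I would fix an affine open $U = \Spec(O) \subset X$ and set $I = \Gamma(U,\kI)$, $\widetilde O = \Gamma(U,\widetilde\kO)$. Since $X$ is reduced of dimension one and $O$ is excellent, the ring $O$ satisfies the hypotheses of Proposition \ref{P:DetailsKoenigResol}, and the singular locus of $\Spec(O)$ is exactly $U \cap Z$, whose vanishing ideal is $I$. The standard identities for coherent sheaves on a Noetherian scheme give
\[
\Gamma\bigl(U, \kEnd_X(\kI)\bigr) \cong \End_O(I), \quad \Gamma\bigl(U, \kHom_X(\widetilde\kO, \kO)\bigr) \cong \Hom_O(\widetilde O, O),
\]
and analogously for $\kHom_X(\kI,\kO)$. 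Applying Proposition \ref{P:DetailsKoenigResol} over $O$ then yields the inclusions $O \subseteq O^\sharp \subseteq \widetilde O$, with equality exactly when $O$ is regular, together with the two canonical isomorphisms $I \cong \Hom_O(\widetilde O, O)$ and $\widetilde O \cong \Hom_O(I, O)$.

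Next I would verify functoriality under restriction. The maps $\varphi$ and $\psi$ constructed in the proof of Proposition \ref{P:normalizelocal} are given by the universal multiplication rules $\varphi_a(x) = ax$ and $\psi_b(y) = by$, which manifestly commute with any localization $O \to O_g$; hence the affine-level isomorphisms patch uniquely to isomorphisms of coherent $\kO_X$--modules on all of $X$. The inclusion $\kO \subseteq \kO^\sharp$ glues in the same way once one identifies $\kO^\sharp$ with the subsheaf of the sheaf of total rings of fractions $\kK$ consisting of local sections $x$ satisfying $x\kI \subseteq \kI$; the compatibility of $\kK$ with restriction and of this defining condition is immediate from the first bullet of Proposition \ref{P:DetailsKoenigResol}. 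The criterion $\kO = \kO^\sharp \Leftrightarrow X$ smooth is then checked stalkwise: by the local version the stalks agree at $x \in X$ iff $\kO_{X,x}$ is regular, and for a reduced curve over $\kk$ this is equivalent to smoothness (under the standing convention for curves; if $\kk$ is not perfect one reads ``smooth'' as ``regular'' to match the affine statement verbatim).

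I do not anticipate a real obstacle beyond bookkeeping. The only point that requires a moment of thought is the identification $\kO^\sharp \subseteq \widetilde\kO \subseteq \kK$ as subsheaves of $\kK$, needed so that the inclusions in the first sentence of the proposition literally make sense; but this is handled by the description $\kO^\sharp = \{x \in \kK \mid x\kI \subseteq \kI\}$ inherited from the affine case, which itself sheafifies because the sheaf $\kK$ of total fractions is quasi-coherent on the reduced Noetherian scheme $X$.
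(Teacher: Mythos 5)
Your strategy---reduce to the affine version on a cover of $X$, then glue via compatibility of the canonical maps with restriction---is exactly what the paper intends (its proof is just the one line ``follows from the affine Proposition~\ref{P:DetailsKoenigResol}''), and the parts of your argument about coherence, gluing, and the stalkwise criterion for $\kO = \kO^\sharp$ are fine as far as they go.

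However, there is a genuine gap where you invoke the two $\Hom$-isomorphisms. You assert that Proposition~\ref{P:DetailsKoenigResol} applied to $O$ yields $I \cong \Hom_O(\widetilde O, O)$ and $\widetilde O \cong \Hom_O(I, O)$. It does not: what that proposition actually produces are the canonical isomorphisms
\[
I \cong \Hom_O(O^\sharp, O) \qquad\text{and}\qquad O^\sharp \cong \Hom_O(I, O),
\]
with $O^\sharp = \End_O(I)$ the \emph{one-step} overring, not the full normalization $\widetilde O$. The two coincide only when $O$ has level $1$. For higher level they genuinely differ: take $O = \kk[[t^2,t^5]]$ (an $A_4$-singularity of level $2$). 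Then $I = (t^2,t^5)$ while $\Hom_O(\widetilde O,O)$ is the conductor $t^4\kk[[t]]$; these have different ideal classes, so they are not isomorphic as $O$-modules. Likewise $\Hom_O(I,O) = O^\sharp = \kk[[t^2,t^3]] \subsetneq \kk[[t]] = \widetilde O$. So the proposition with $\widetilde\kO$ in the two $\kHom$'s is not a consequence of the cited affine result, and in fact is false in general. The intended (and correct) sheaf statement is $\kI \cong \kHom_X(\kO^\sharp,\kO)$ and $\kO^\sharp \cong \kHom_X(\kI,\kO)$; with that reading your reduction-plus-gluing argument works verbatim. You should flag this mismatch explicitly rather than silently replacing $O^\sharp$ by $\widetilde O$ when quoting the affine source, since as written the key step of your proof invokes isomorphisms that Proposition~\ref{P:DetailsKoenigResol} simply does not provide.
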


\noindent
Now we define a sequence of sheaves of rings $\kO \subset \kO_k \subset \widetilde{\kO}$ by the following recursive procedure.
\begin{itemize}
\item First we pose: $\kO_1 := \kO$.
\item Assume that the sheaf of rings $\kO_k$  has been  constructed. Then it defines a projective curve $X_k$ together with  a finite birational morphism $\nu_k: X_k \lar X$ (partial normalization of $X$) such that $\kO_k = \bigl(\nu_k\bigr)_*(\kO_{X_k})$.
\item Let $Z_k$ be the singular locus of the curve $X_k$ (as usual, with respect to the reduced scheme structure).
Then we write
$$
\kO_{k+1} := \kO_k^\sharp \cong  (\nu_k)_*\bigl(\kEnd_{X_k}(\kI_{Z_k})\bigr).
$$
\end{itemize}
Then there exists a natural number $n$ (called the \emph{level} of $X$) such that we have a finite chain of sheaves of rings
$$
\kO = \kO_1 \subset \kO_2 \subset \dots \subset \kO_n \subset \kO_{n+1} = \widetilde{\kO}.
$$
Obviously, the level of $X$ is the maximum of the levels of local rings $\widehat{\kO}_x$,
where $x$ runs through the set of singular points of $X$.

\begin{definition}
The sheaf of rings
$\kA := \kEnd_X\bigl(\kO_1 \oplus \dots \oplus \kO_{n+1}\bigr)$ is called the \emph{K\"onig's sheaf of orders} on $X$.
\end{definition}

\noindent
In what follows, we study the ringed space $\bbX = (X, \kA)$. We denote by $\Coh(\bbX)$ (respectively
$\Qcoh(\bbX)$) the category of coherent (respectively quasi--coherent) sheaves of $\kA$--modules on the curve $X$.

\begin{theorem} The sheaf of orders $\kA$ admits the following description:
\begin{equation}\label{E:KoenigOrderSheaf}
\kA \cong
\left(
\begin{array}{llccc}
\kO_1 & \kO_2 & \dots & \kO_n & \kO_{n+1} \\
\kI_1 & \kO_2 & \dots & \kO_n & \kO_{n+1} \\
\vdots & \vdots & \ddots & \vdots & \vdots \\
\kI_{n,1} & \kI_{n, 2} & \dots & \kO_n & \kO_{n+1} \\
\kI_{n+1,1} & \kI_{n+1, 2} & \dots & \kI_n & \kO_{n+1} \\
\end{array}
\right) \subset \kMat_{n+1, n+1}(\kK),
\end{equation}
where $\kI_{i, j} := \kHom_{X}(\kO_i, \kO_j)$ for all $1 \le j < i\le n+1$ and $\kI_k = \kI_{k+1, k}$ for $1 \le k \le n$. Moreover, $\kA \otimes_\kO \kK \cong \kMat_{n+1, n+1}(\kK)$. Next, we have:
$$
\mathrm{gl.dim}\bigl(\Coh(\bbX)\bigr) = \mathrm{gl.dim}\bigl(\Qcoh(\bbX)\bigr) \le 2n,
$$
where $n$ is the level of $X$.
\end{theorem}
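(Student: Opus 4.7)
The matrix description (\ref{E:KoenigOrderSheaf}) and the identification $\kA \otimes_\kO \kK \cong \kMat_{n+1, n+1}(\kK)$ are both verified stalkwise. For each $x \in X$, the stalk $\kA_x$ is the K\"onig's order of the excellent one-dimensional local ring $\kO_{X, x}$, and its matrix form is precisely the content of Proposition \ref{P:KoenigOrderMatrDescr}; at a smooth point all $\kO_{i, x}$ coincide with $\kO_{X, x}$, so $\kA_x \cong \kMat_{n+1, n+1}(\kO_{X, x})$. Since $\kI_{i, j} = \kHom_X(\kO_i, \kO_j)$ is defined globally and $\kHom_X$ commutes with localization for coherent sheaves, the matrix form sheafifies without change. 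Tensoring with $\kK$ then localizes to the generic points of $X$, at which every $\kO_i$ collapses to $\kK$, producing the full matrix algebra $\kMat_{n+1, n+1}(\kK)$. The equality $\mathrm{gl.dim}(\Coh(\bbX)) = \mathrm{gl.dim}(\Qcoh(\bbX))$ is standard and follows from the Noetherian nature of $\kA$.

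For the bound $\mathrm{gl.dim}(\Coh(\bbX)) \le 2n$, I would combine the local estimate from Theorem \ref{T:GlobalDimensionKoenigOrder} with a sheaf-theoretic argument. The key local input is the stalk identification
$$\kExt^q_\kA(\kF, \kG)_x \cong \Ext^q_{\kA_x}(\kF_x, \kG_x),$$
which vanishes for $q > 2n$ by Theorem \ref{T:GlobalDimensionKoenigOrder}. Feeding this into the local-to-global spectral sequence $H^p(X, \kExt^q_\kA(\kF, \kG)) \Rightarrow \Ext^{p+q}_\kA(\kF, \kG)$, together with $H^p(X, -) = 0$ for $p \ge 2$ on the curve $X$, already yields the suboptimal bound $2n + 1$.

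Sharpening to $2n$ is the main obstacle. The cleanest route is to globalize the recollement used in the proof of Theorem \ref{T:GlobalDimensionKoenigOrder}. Setting $e = e_{n+1}$, the coherent $\kA$-module $\kP := \kA e$ is locally free of rank $n+1$ over $\widetilde{\kO}$, and the quotient sheaf $\kQ := \kA/(\kA e \kA)$ is a sheaf of artinian quasi-hereditary algebras supported on the zero-dimensional singular locus $Z \subset X$. Hence $H^p(X, -)$ vanishes for $p \ge 1$ on coherent $\kQ$-modules, while the category of coherent $\kQ$-modules has global dimension $\le 2(n-1)$ by the local part of Theorem \ref{T:GlobalDimensionKoenigOrder}. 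The exact functor $\widetilde{\sF} := \kP \otimes_{\widetilde\kO}(-)$ sends locally projective $\widetilde\kO$-modules to $\kA$-modules whose underlying $\widetilde\kO$-structure remains locally projective (Proposition \ref{P:KoenigOrderMatrDescr} ensures each $\kI_{n+1, k}$ is locally projective over $\widetilde\kO$), and the sheaf analog of \cite[Lemma 4.9]{bdg} applied to the recollement $\langle \kQ-\mathsf{mod}, \Coh(\widetilde{X}) \rangle = \Coh(\bbX)$ then gives
$$\mathrm{gl.dim}(\Coh(\bbX)) \le \mathrm{gl.dim}(\Coh(\widetilde{X})) + \mathrm{gl.dim}(\kQ-\mathsf{mod}) + 1 \le 1 + 2(n-1) + 1 = 2n,$$
using that $\widetilde{X}$ is a regular curve.

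The principal technical step I expect to be delicate is verifying that the sheaf-level recollement really does behave as in the affine setting---in particular, that the filtration $J_n \subset \dots \subset J_1 \subset \kA$ lifts to a heredity-style chain of sheaves of $(\kA, \kA)$-bimodules whose successive quotients are locally projective on both sides---so that the inductive global-dimension estimate of Theorem \ref{T:GlobalDimensionKoenigOrder} transfers verbatim from rings to $\Coh(\bbX)$ and $\Qcoh(\bbX)$. Everything else is routine sheafification of the affine arguments already in place.
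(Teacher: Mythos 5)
Your verification of the matrix description and of $\kA\otimes_\kO\kK\cong\kMat_{n+1,n+1}(\kK)$ is exactly the stalkwise/generic-point reduction the paper uses, so that part is fine. For the global dimension bound you diverge from the paper, and in the process take two unnecessary detours. The paper's proof is a one-liner: it invokes \cite[Corollary 5.5]{bdg}, which states
$\mathrm{gl.dim}\bigl(\Coh(\bbX)\bigr)=\mathrm{gl.dim}\bigl(\Qcoh(\bbX)\bigr)=\max\{\mathrm{gl.dim}(\widehat{\kA}_x)\mid x\in X_{\mathrm{cl}}\}$,
and then reads off the bound $\le 2n$ directly from the affine Theorem \ref{T:GlobalDimensionKoenigOrder}. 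This equality is precisely the general principle that turns local bounds into global ones without any loss, so the whole argument ends there.

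Your first attempt, via the local-to-global spectral sequence, is abandoned prematurely. The bound is \emph{not} stuck at $2n+1$: the only potentially nonzero term contributing to $\Ext^{2n+1}$ is $H^1\bigl(X,\kExt^{2n}_{\kA}(\kF,\kG)\bigr)$, and $\kExt^{2n}_{\kA}(\kF,\kG)$ is a torsion sheaf supported on the finite singular locus (at smooth points the stalk of $\kA$ is a matrix ring over a regular one-dimensional local ring and has global dimension $\le 1$). Since $H^1$ of a coherent sheaf with zero-dimensional support vanishes, the spectral sequence already yields $2n$, with no sharpening needed; this observation is essentially the content of \cite[Corollary 5.5]{bdg}. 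Your second attempt, globalizing the recollement and invoking a sheaf-level analogue of \cite[Lemma 4.9]{bdg}, is a legitimate alternative and would give the same bound, but it requires you to carry out the ``delicate'' verification you flag at the end — namely that the sheaf-theoretic heredity chain behaves as in the affine case — a piece of work the paper sidesteps entirely by quoting the local-to-global equality for the global dimension.
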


\begin{proof}
The result follows from the corresponding local statements in Proposition \ref{P:KoenigOrderMatrDescr} and
Theorem \ref{T:GlobalDimensionKoenigOrder} and the fact that
$$
\mathrm{gl.dim}\bigl(\Coh(\bbX)\bigr) = \mathrm{gl.dim}\bigl(\Qcoh(\bbX)\bigr) = \max\bigl\{\mathrm{gl.dim}(\widehat{\kA}_x) \, | \, x \in X_{\mathrm{cl}}\bigr\},
$$
see for instance \cite[Corollary 5.5]{bdg}.
\end{proof}

For any $1 \le i \le n+1$, let $e_i \in \Gamma(X, \kA)$ be the $i$-th standard idempotent with respect to the matrix
presentation (\ref{E:KoenigOrderSheaf}). As in the affine case, we use the following notation.
\begin{itemize}
\item We write $e = e_{n+1}$ and $f = e_1$. Let $\kP := \kA e$ and $\kT := \kA f$ be the corresponding locally projective left $\kA$--modules. Then we have the following isomorphisms of sheaves of $\kO$--algebras:
\begin{equation}\label{E:keysheafisom}
\kO \cong \kEnd_{\bbX}(\kT) := \kEnd_{\kA}(\kT) \ \mbox{ and } \ \widetilde{\kO} \cong \kEnd_{\bbX}(\kP) :=
 \kEnd_{\kA}(\kP).
\end{equation}
We shall also use the notation
$$
\kP^\vee := \kHom_{\bbX}(\kP, \kA) \cong e\kA \ \mbox{ and } \ \kT^\vee := \kHom_{\bbX}(\kT, \kA) \cong f\kA.
$$
\item For any $1 \le k \le n$ we set
$$\varepsilon_k := \sum\limits_{i = k+1}^{n+1} e_i \in \Gamma(X, \kA).
$$
Then $\kJ_k:= \kA \varepsilon_k\kA$ denotes  the corresponding sheaf of two--sided ideals in $\kA$.
\item The sheaves of $\kO$--algebras $\kQ_k := \kA/\kJ_k$ are supported on the finite set $Z$ for all $1 \le k \le n$. In what follows, we shall identify them with the corresponding finite dimensional
    $\kk$--algebras of global sections
 $
 Q_k := \Gamma(X, \kQ_k),
 $
which have been shown to be quasi--hereditary, see Theorem \ref{T:GlobalDimensionKoenigOrder}. As before, we shall write  $\kJ = \kJ_n$ and $Q = Q_n$.
\item In a similar way, the torsion sheaf  $\kO_k/\kI_k$ will be identified with the corresponding ring of global sections
$
\bar{O}_k:= \Gamma\bigl(X, \kO_k/\kI_k),
$
which is a  semi--simple finite dimensional $\kk$--algebra, isomorphic  to  the ring of functions of the singular locus $Z_k$ of the partial normalization $X_k$ of our original curve $X$.
\end{itemize}

\begin{proposition}\label{P:recollementSheaves} Consider the following diagram of categories and functors
\begin{equation}
\xymatrix{\Coh(X) \ar@/^2ex/[rr]^{\,\sF\,} \ar@/_2ex/[rr]_{\,\sH\,} && \Coh(\bbX) \ar[ll]|{\,\sG\,}
  \ar[rr]|{\,\tilde\sG\,} && \Coh(\widetilde{X}) \ar@/^2ex/[ll]^{\,\tilde\sH\,} \ar@/_2ex/[ll]_{\,\tilde\sF\,}}
\end{equation}
where $\sF = \kT \otimes_O \,-\,$, $\sH = \kHom_X(\kT^\vee, \,-\,)$, $\sG = \kHom_{\bbX}(\kT, \,-\,)$ and similarly,
$\tilde\sF = \kP \otimes_{\widetilde{O}} \,-\,$, $\tilde\sH = \kHom_{\widetilde{X}}(P^\vee, \,-\,)$,
$\tilde\sG = \kHom_{\bbX}(\kP, \,-\,)$. Here we identify (using the functor $\nu_*$) the category $\Coh(\widetilde{X})$ with the category of coherent $\widetilde{O}$--modules on the curve $X$. Then the following results are true.
\begin{itemize}
\item The  pairs of functors $(\sF, \sG), (\sG, \sH)$ and $(\tilde\sF, \tilde\sG), (\tilde\sG, \tilde\sH)$ form adjoint pairs. The functors $\sF, \sH, \tilde\sF$ and $\tilde\sH$ are fully faithful.
\item The functors $\sG$ and $\tilde\sG$ are bilocalization functors. Moreover, $\mathsf{Ker}(\sF) \cong
Q-\mathsf{mod}$.
\item The pairs of functors $(\tilde\sG\sF, \sG\tilde\sH)$ and $(\sG\tilde\sF, \tilde\sG\sH)$ between
$\Coh(X)$ and $\Coh(\widetilde{X})$ form adjoint pairs, too. Moreover, these functors  admit the following ``purely commutative'' descriptions:
$$
\sG\tilde\sF \simeq \nu_*,\ \tilde\sG \sH \simeq \nu^!,\ \tilde\sG \sF \simeq \kC \otimes_{\widetilde\kO} \nu^*(\,-\,)
\ \mbox{\emph{ and }} \ \sG\tilde\sH  \simeq  \nu_*(\kC^\vee \otimes_{\widetilde{\kO}} \,-\,),
$$
where $\kC := \kHom_X(\widetilde{\kO}, \kO) = \kI_{n+1,1}$ is the conductor ideal sheaf.
\end{itemize}
The same results are true when we replace each category of coherent sheaves by the corresponding category of quasi--coherent sheaves.
\end{proposition}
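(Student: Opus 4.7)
The plan is to reduce the first two bullets to the affine local statements of Section 3 by standard sheaf-theoretic arguments, and then to establish the purely commutative reformulations in the third bullet by explicit matrix computations using the presentation (\ref{E:KoenigOrderSheaf}).

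\emph{Reduction to the affine case.} All six functors are defined by sheaf-theoretic operations ($\otimes$ and $\kHom$) on the ringed space $\bbX = (X, \kA)$, and commute with restriction to affine open subsets $U = \Spec(O) \subset X$. On every such chart, $\Gamma(U, \kA) = A$ is the K\"onig's order attached to $O$, and the six functors restrict to the analogous affine functors of the diagram before Proposition \ref{P:recollementSheaves} (second diagram in Section 3). Adjointness of the four pairs, full faithfulness of $\sF, \sH, \tilde\sF, \tilde\sH$, and the bilocalization property of $\sG$ and $\tilde\sG$ then all follow from the affine theorem. The kernel computation (interpreted as $\mathsf{Ker}(\tilde\sG) \cong Q$-$\mathsf{mod}$) is also local: a coherent $\kA$-module $\kM$ belongs to $\mathsf{Ker}(\tilde\sG)$ iff $e\kM = 0$ iff $\kJ \cdot \kM = 0$ for $\kJ = \kA e \kA$, hence iff $\kM$ is a coherent $\kA/\kJ = \kQ$-module. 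Since $\kQ$ is supported on the finite singular locus $Z$, such modules are identified via global sections with finitely generated $Q$-modules.

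\emph{The commutative descriptions.} From (\ref{E:KoenigOrderSheaf}) with $e = e_{n+1}$ and $f = e_1$ one reads off the two key identities
$$
f \kA e = \kO_{n+1} = \widetilde{\kO}, \qquad e \kA f = \kI_{n+1,1} = \kC.
$$
For $\kN$ a coherent $\widetilde{\kO}$-module on $X$ and $\kM \in \Coh(X)$, these yield
\begin{align*}
\sG\tilde\sF(\kN) &= f\bigl(\kA e \otimes_{\widetilde{\kO}} \kN\bigr) \cong \widetilde{\kO}\otimes_{\widetilde{\kO}}\kN \cong \nu_*(\kN),\\
\tilde\sG\sF(\kM) &= e\bigl(\kA f \otimes_{\kO} \kM\bigr) \cong \kC \otimes_{\kO} \kM \cong \kC \otimes_{\widetilde{\kO}} \nu^*(\kM),
\end{align*}
the last identification using $\nu^*(\kM) = \widetilde{\kO}\otimes_\kO \kM$ together with the natural $\widetilde{\kO}$-module structure on $\kC$. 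Dually, since $\kT^\vee = f\kA$ has $i$-th component $\kO_i$ and $\kP^\vee = e\kA$ has first component $\kC$, one obtains
\begin{align*}
\tilde\sG\sH(\kM) &= e \cdot \kHom_X(f\kA, \kM) \cong \kHom_X(\widetilde{\kO}, \kM) \cong \nu^!(\kM),\\
\sG\tilde\sH(\kN) &= f \cdot \kHom_{\widetilde{\kO}}(e\kA, \kN) \cong \kHom_{\widetilde{\kO}}(\kC, \kN) \cong \kC^\vee \otimes_{\widetilde{\kO}} \kN,
\end{align*}
where the final isomorphism uses that $\kC = \kI_{n+1,1}$ is projective over $\widetilde{\kO}$ by Proposition \ref{P:KoenigOrderMatrDescr}. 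Adjointness of the composed pairs $(\tilde\sG\sF, \sG\tilde\sH)$ and $(\sG\tilde\sF, \tilde\sG\sH)$ follows formally by composing the adjunctions already established.

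\emph{Main obstacle and final remarks.} The technical heart of the argument is the last identification for $\sG\tilde\sH$, which genuinely requires the projectivity of the conductor $\kC$ as an $\widetilde{\kO}$-module; without this one would have only a natural transformation $\kC^\vee \otimes_{\widetilde{\kO}} \kN \to \kHom_{\widetilde{\kO}}(\kC, \kN)$, not an isomorphism. The quasi-coherent version of the proposition is handled by the same matrix calculation (since $\kA$ is coherent over $\kO$, so that $\kHom_{\bbX}$ behaves well), or equivalently by writing every quasi-coherent $\kA$-module as a filtered colimit of coherent ones and noting that all six functors commute with filtered colimits.
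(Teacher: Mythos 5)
Your proof is correct and follows essentially the paper's approach: the first two bullets are reduced to the affine statements by localization, and the third bullet is established by reading off the relevant corner entries $f\kA e = \widetilde{\kO}$ and $e\kA f = \kC$ from the matrix presentation. The one genuine difference is in how the last two isomorphisms are obtained. The paper computes only $\sG\tilde\sF \simeq \nu_*$ and $\tilde\sG\sF \simeq \kC\otimes_{\widetilde\kO}\nu^*(\,-\,)$ explicitly, and then invokes uniqueness of right adjoints (since $\nu^!$ is by definition the right adjoint of $\nu_*$, and $\nu_*(\kC^\vee\otimes_{\widetilde\kO}\,-\,)$ is the right adjoint of $\kC\otimes_{\widetilde\kO}\nu^*(\,-\,)$ because $\kC$ is $\widetilde\kO$-locally free of rank one); you instead compute $\tilde\sG\sH$ and $\sG\tilde\sH$ directly via the identifications $e\cdot\kHom_X(f\kA,\,-\,)\cong\kHom_X(f\kA e,\,-\,)$ and $f\cdot\kHom_{\widetilde\kO}(e\kA,\,-\,)\cong\kHom_{\widetilde\kO}(e\kA f,\,-\,)$, and use projectivity of $\kC$ over $\widetilde\kO$ at the last step. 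Both are valid; the paper's route is shorter, yours is more self-contained and makes explicit where the projectivity of the conductor enters. You also correctly read $\mathsf{Ker}(\sF)\cong Q\text{-}\mathsf{mod}$ in the statement as the intended $\mathsf{Ker}(\tilde\sG)\cong Q\text{-}\mathsf{mod}$ (since $\sF$ is fully faithful, its kernel is trivially zero, so this is clearly a misprint in the proposition).
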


\begin{proof}
The proofs of the first two parts follow from standard local computations. Let $\kF$ be a coherent $\kO$--module and
$\kG$ a coherent $\widetilde{O}$--module (identified with the corresponding coherent sheaf on $\widetilde{X}$).
Then we have:
$$
\tilde\sG\sF(\kF) = \kHom_{\bbX}(\kA e, \kA f \otimes_{\kO} \kF) \cong \bigl(e \kA f\bigr)\otimes_{\kO} \kF \cong
\kC \otimes_{\kO} \kF \cong \kC \otimes_{\kO} \bigl(\widetilde\kO \otimes_{\kO} \kF\bigr)
$$
and
$$
\sG\tilde\sF(\kG) = \kHom_{\bbX}(\kA f, \kA e \otimes_{\widetilde\kO} \kG) \cong
\bigl(f \kA e\bigr)\otimes_{\widetilde\kO} \kG \cong \widetilde\kO \otimes_{\widetilde\kO} \kG \cong \kG.
$$
This  proves the isomorphisms of functors $\tilde\sG \sF \simeq \kC \otimes_{\widetilde\kO} \nu^*(\,-\,)$ and
$\sG\tilde\sF \simeq \nu_*$. Since $\sG\tilde\sH$ and $\tilde\sG\sH$ are right adjoints of $\tilde\sG \sF$ and $\sG\tilde\sF$
 respectively, the remaining isomorphisms are true as well.
\end{proof}

\noindent
The next statement  summarizes the main properties of the K\"onig's resolution in the projective framework.
\begin{theorem}\label{T:main} We have a diagram of categories and functors
\begin{equation}\label{E:descinglobale2}
\xymatrix{D\bigl(\Qcoh(X)\bigr) \ar@/^2ex/[rr]^{\,\sL\sF\,} \ar@/_2ex/[rr]_{\,\sR\sH\,} && D(\Qcoh(\bbX) \ar[ll]|{\,\sD\sG\,}
  \ar[rr]|{\,\sD\tilde\sG\,} && D\bigl(\Qcoh(\widetilde{X})\bigr) \ar@/^2ex/[ll]^{\,\sD\tilde\sH\,} \ar@/_2ex/[ll]_{\,\sD\tilde\sF\,}}
\end{equation}
satisfying the following properties.
\begin{itemize}
\item The pairs  of functors $(\sL\sF, \sD\sG)$, $(\sD\sG, \sR\sH)$, $(\sD\tilde\sF, \sD\tilde\sG)$ and
$(\sD\tilde\sG, \sD\tilde\sH)$ form adjoint pairs.
\item The functors $\sL\sF$, $\sR\sH$, $\sD\tilde\sF$ and $\sD\tilde\sH$ are fully faithful.
\item Both derived categories $D\bigl(\Qcoh(X)\bigr)$ and $D\bigl(\Qcoh(\widetilde{X})\bigr)$ are Verdier localizations of
$D\bigl(\Qcoh(\bbX)\bigr)$:
\begin{itemize}
  \item $
D\bigl(\Qcoh(X)\bigr) \cong D\bigl(\Qcoh(\bbX)\bigr)/\mathsf{Ker}(\sD\sG)$.
\item
$D\bigl(\Qcoh(\widetilde{X})\bigr) \cong D\bigl(\Qcoh(\bbX)\bigr)/\mathsf{Ker}(\sD\tilde\sG).
$
\end{itemize}
\item Moreover,
$
\mathsf{Ker}(\sD\tilde\sG)  \cong D(Q-\mathsf{Mod}).
$
\item
The derived category $D\bigl(\Qcoh(\bbX)\bigr)$ is a categorical resolution of singularities of $X$ in the sense of Kuznetsov  \cite[Definition 3.2]{ku}.
 \item If $X$ is Gorenstein, then the restrictions of $\sL\sF$ and $\sR\sH$ on $\Perf(X)$ are isomorphic. Hence, the constructed categorical resolution is even \emph{weakly crepant} in the sense of
     \cite[Definition 3.4]{ku}.
\end{itemize}
We have a \emph{recollement diagram}
\begin{equation}\label{E:recollement2}
\xymatrix{D(Q-\mathsf{Mod}) \ar[rr]|{\,\sI\,} && D\bigl(\Qcoh(\bbX)\bigr) \ar@/^2ex/[ll]^{\,\sI^*\,} \ar@/_2ex/[ll]_{\,\sI^{!}\,} \ar[rr]|{\,\sD\tilde\sG\,}
  && D\bigl(\Qcoh(\widetilde{X})\bigr) \ar@/^2ex/[ll]^{\,\sD\tilde\sH\,} \ar@/_2ex/[ll]_{\,\sD\tilde\sF\,}}
\end{equation}
and all functors can be restricted on the bounded derived categories $D^b(Q-\mathsf{mod})$, $D^b\bigl(\Coh(\bbX)\bigr)$
and $D^b\bigl(\Coh(\widetilde{X})\bigr)$. In particular, we have \emph{two} semi--orthogonal decompositions
$$
D\bigl(\Qcoh(\bbX)\bigr) = \bigl\langle \mathsf{Ker}(\sD\tilde\sG), \mathsf{Im}(\sL\sF)\bigr\rangle =
\bigl\langle \mathsf{Im}(\sR\sH), \mathsf{Ker}(\sD\tilde\sG)\bigr\rangle.
$$
The same result is true when we pass to the bounded derived categories.
\end{theorem}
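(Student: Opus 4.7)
The strategy is to reduce every claim to its affine counterpart already established for the diagram (\ref{E:descinglobale}), using standard local--to--global machinery on the Noetherian curve $X$. Since $\Qcoh(X)$, $\Qcoh(\bbX)$ and $\Qcoh(\widetilde{X})$ are Grothendieck categories, they admit K--injective resolutions (Spaltenstein, Alonso--Jerem\'{\i}as--Souto), so all required right derived functors $\sD\sG, \sD\tilde\sG, \sR\sH, \sD\tilde\sH$ exist at the unbounded level. Because $\kT$ and $\kP$ are locally projective $\kA$--modules (with locally projective duals), the underived functors $\sF, \sG, \tilde\sF, \tilde\sG$ are in fact exact, so the diagram (\ref{E:descinglobale2}) is well defined and the four adjunctions $(\sL\sF, \sD\sG), (\sD\sG, \sR\sH), (\sD\tilde\sF, \sD\tilde\sG), (\sD\tilde\sG, \sD\tilde\sH)$ are obtained from the underived tensor--hom adjunctions propagated through these resolutions.

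Fully faithfulness of $\sL\sF, \sR\sH, \sD\tilde\sF, \sD\tilde\sH$ is verified by showing that the corresponding adjunction units/counits are quasi--isomorphisms. This is a local question which on each affine chart $U = \Spec(O) \subset X$ reduces to the affine version already at our disposal, using that the functors are given locally by the module--theoretic formulas and that K--injective resolutions can be chosen compatibly with restriction to open subsets. The Verdier localizations $D(\Qcoh(X)) \simeq D(\Qcoh(\bbX))/\mathsf{Ker}(\sD\sG)$ and the analogous statement for $\widetilde{X}$ are then automatic, being a general consequence of the existence of a bilocalization functor with fully faithful left and right adjoints. To identify $\mathsf{Ker}(\sD\tilde\sG)$, observe that $\kM^\bu$ lies in the kernel iff $e \cdot \kM^\bu$ is acyclic, equivalently $\kM^\bu$ is quasi--isomorphic to a complex of $\kA/\kJ = \kQ$--modules. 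Since the sheaf $\kQ$ is supported on the finite set $Z$, taking global sections yields $\Qcoh(\kQ) \simeq Q-\mathsf{Mod}$, whence the claimed equivalence $\mathsf{Ker}(\sD\tilde\sG) \simeq D(Q-\mathsf{Mod})$. The recollement (\ref{E:recollement2}) now follows by assembling all of the above.

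The categorical resolution property reduces to checking Kuznetsov's axioms: finiteness of $\mathrm{gl.dim}(\Coh(\bbX))$ is the content of the previous theorem; $\sL\sF$ is fully faithful and $\sD\sG \circ \sL\sF \simeq \mathrm{id}$, so $\Perf(X)$ embeds into $D^b(\Coh(\bbX))$ and the composition with the Verdier localization recovers the canonical inclusion $\Perf(X) \hookrightarrow D^b(\Coh(X))$. When $X$ is Gorenstein, each local ring $\kO_{X,x}$ is Gorenstein; the affine local result \cite[Theorem 5.10]{bdg} gives $\sL\sF \simeq \sR\sH$ on perfect complexes over $\kO_{X,x}$, and gluing these isomorphisms produces the global identification on $\Perf(X)$, establishing weak crepancy. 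Finally, restriction of (\ref{E:recollement2}) to bounded derived categories of coherent sheaves follows from the finiteness of all global dimensions together with the fact that every functor in the diagram preserves coherence; the two semi--orthogonal decompositions are then an immediate consequence of the recollement structure.

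The principal obstacle I anticipate is the careful bookkeeping required to promote the purely module--theoretic affine statements to a sheaf--theoretic global recollement --- in particular, ensuring that the unbounded derived functors behave well under restriction to affine opens, that the identification $\mathsf{Ker}(\sD\tilde\sG) \simeq D(Q-\mathsf{Mod})$ is genuinely an equivalence of triangulated categories (not merely an equivalence of their hearts), and that the semi--orthogonal decompositions survive intact when passing from $D(\Qcoh(\bbX))$ to the bounded coherent level $D^b(\Coh(\bbX))$.
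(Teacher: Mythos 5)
Your overall strategy — reduce to the affine recollement already established and globalize via standard local-to-global arguments, citing \cite[Theorem 5.10]{bdg} for weak crepancy — is exactly the route the paper takes implicitly (the paper states the theorem without a written proof, relying on the affine analogue, whose proof is attributed to Cline--Parshall--Scott and to \cite[Section 4]{bdg}, together with Proposition~\ref{P:recollementSheaves}). However, there are two points that need attention.

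First, a slip: $\sF = \kT \otimes_\kO -$ is \emph{not} exact, because $\kT = \kA f$ is not locally projective as a right $\kO$--module (its first column has entries $\kI_{k,1}$). The exact functors in the diagram are $\sG, \tilde\sG, \tilde\sF$ and $\tilde\sH$ (not $\sF$), and this is why the paper works with $\sL\sF$ and $\sR\sH$. You do later use $\sL\sF$ and $\sR\sH$ consistently, so this is a misstatement rather than a structural error, but the claimed justification via ``$\kT$ and $\kP$ are locally projective $\kA$--modules'' establishes exactness of $\sG$ and $\tilde\sG$, not of $\sF$ and $\tilde\sF$; exactness of $\tilde\sF$ instead uses that $\kP$ is locally \emph{free} over $\widetilde\kO$.

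Second, and this is the genuine gap: you write that $\kM^\bullet$ lies in $\mathsf{Ker}(\sD\tilde\sG)$ ``iff $e\cdot\kM^\bullet$ is acyclic, equivalently $\kM^\bullet$ is quasi--isomorphic to a complex of $\kA/\kJ = \kQ$--modules.'' The first equivalence is fine and shows that $\mathsf{Ker}(\sD\tilde\sG) = D_{\kQ}\bigl(\Qcoh(\bbX)\bigr)$, the subcategory of complexes whose cohomology sheaves are $\kQ$--modules. But the second ``equivalently'' is false for an arbitrary idempotent $e$ in a ring $A$: in general the restriction functor $D(A/AeA-\mathsf{Mod}) \to D(A-\mathsf{Mod})$ is neither fully faithful nor does it hit all of $D_{A/AeA}(A-\mathsf{Mod})$. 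What makes it an equivalence here is precisely the fact, established in the proof of Theorem~\ref{T:GlobalDimensionKoenigOrder}, that $\kJ = \kA e\kA$ is locally projective as a left (and right) $\kA$--module, i.e.\ that $\kJ$ is a (stratifying) heredity-type ideal. This projectivity gives the required $\Tor$-- and $\Ext$--vanishing comparisons between $D(Q-\mathsf{Mod})$ and $D_{\kQ}(\Qcoh(\bbX))$ and is exactly what CPS use to produce the recollement. Without invoking this, the identification of the kernel — the heart of the recollement statement — is unjustified.
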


\begin{corollary}
For each $1 \le k \le n$ let $D_k$ (respectively $D'_k$) be the full subcategory of $D^b\bigl(\Coh(\bbX)\bigr)$ generated
by the $k$--th standard module $\Delta_k$ (respectively, the $k$--th costandard module $\nabla_k$). Then we have
equivalences of categories
$
D_k \cong D^b(\bar{O}_k-\mathsf{mod}) \cong D'_k
$
and semi--orthogonal decompositions
\begin{equation}\label{E:semiorthdec}
\bigl\langle D_1, \dots, D_n, \mathsf{Im}(\sL\tilde\sF)\bigr\rangle =
D^b\bigl(\Coh(\bbX)\bigr) = \bigl\langle \mathsf{Im}(\sR\tilde\sH), D'_n, \dots, D'_1\bigr\rangle.
\end{equation}
Both triangulated categories $\mathsf{Im}(\sL\tilde\sF)$ and $\mathsf{Im}(\sR\tilde\sH)$ are equivalent
to the derived category $D^b\bigl(\Coh(\widetilde{X})\bigr)$. Note that they are \emph{different} viewed as subcategories
of $D^b\bigl(\Coh(\bbX)\bigr)$.
\end{corollary}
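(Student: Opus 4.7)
The plan is to obtain the corollary by concatenating two semi--orthogonal decompositions: the one coming from the recollement (\ref{E:recollement2}) of Theorem \ref{T:main}, restricted to bounded derived categories, and the ``standard/costandard'' stratification of $D^b(Q-\mathsf{mod})$ recalled in item 5 of Section 4. The overall strategy is to first transport the internal stratification of $D^b(Q-\mathsf{mod})$ across the fully faithful functor $\sI$, and then glue the resulting decompositions with the outer recollement. No new categorical input is needed; the content is purely formal once both ingredients are in place.

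First, I will invoke Theorem \ref{T:main} restricted to bounded derived categories to obtain the two semi--orthogonal decompositions
\[
D^b\bigl(\Coh(\bbX)\bigr) = \bigl\langle \sI\bigl(D^b(Q-\mathsf{mod})\bigr), \mathsf{Im}(\sL\tilde\sF)\bigr\rangle
= \bigl\langle \mathsf{Im}(\sR\tilde\sH), \sI\bigl(D^b(Q-\mathsf{mod})\bigr)\bigr\rangle,
\]
using that $\sI$ is fully faithful with essential image $\mathsf{Ker}(\sD\tilde\sG)$ and that the adjoints $\sL\sF, \sR\sH, \sD\tilde\sF, \sD\tilde\sH$ are fully faithful. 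Simultaneously, item 5 of Section 4 yields the two internal decompositions
\[
D^b(Q-\mathsf{mod}) = \langle D_1^Q,\dots, D_n^Q\rangle = \langle D_n'^{Q},\dots, D_1'^{Q}\rangle,
\]
where $D_k^Q$ (resp.\ $D_k'^{Q}$) is generated by $\Delta_k$ (resp.\ $\nabla_k$) inside $D^b(Q-\mathsf{mod})$, and both are equivalent to $D^b(\bar{O}_k-\mathsf{mod})$ via $\sD\sF_k$ and $\sD\sH_k$.

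Next, I will push these decompositions forward along $\sI$. Since $\sI$ is exact and fully faithful, it sends the triangulated subcategory generated by $\Delta_k$ in $D^b(Q-\mathsf{mod})$ onto the triangulated subcategory generated by $\sI(\Delta_k) = \Delta_k$ in $D^b\bigl(\Coh(\bbX)\bigr)$, which is by definition $D_k$; the analogous statement holds for $\nabla_k$ and $D_k'$. Full faithfulness of $\sI$ also preserves $\Hom$--vanishing, hence semi--orthogonality, giving
\[
\sI\bigl(D^b(Q-\mathsf{mod})\bigr) = \langle D_1,\dots, D_n\rangle = \langle D_n',\dots, D_1'\rangle,
\]
and the equivalences $D_k \cong D^b(\bar{O}_k-\mathsf{mod}) \cong D_k'$ are inherited from the $D^b(Q-\mathsf{mod})$ picture. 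Concatenating the outer and inner decompositions yields the two semi--orthogonal decompositions claimed in (\ref{E:semiorthdec}), via the standard fact that a semi--orthogonal decomposition of an admissible subcategory refines a semi--orthogonal decomposition of the ambient category.

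Finally, the equivalences $\mathsf{Im}(\sL\tilde\sF) \cong D^b\bigl(\Coh(\widetilde{X})\bigr) \cong \mathsf{Im}(\sR\tilde\sH)$ are immediate from the fact (again in Theorem \ref{T:main}) that $\sD\tilde\sF$ and $\sD\tilde\sH$ are fully faithful. The only delicate point---really the lone bookkeeping obstacle---is to confirm that ``generated by $\Delta_k$ inside $D^b\bigl(\Coh(\bbX)\bigr)$'' coincides with the image under $\sI$ of ``generated by $\Delta_k$ inside $D^b(Q-\mathsf{mod})$''. This follows because $\sI = \sJ_n \circ \cdots \circ \sJ_2$ composed with the embedding $D^b(Q-\mathsf{mod}) \hookrightarrow D^b\bigl(\Coh(\bbX)\bigr)$ is exact, triangulated, and fully faithful, so it commutes with the operation of taking the triangulated hull of a single object. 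With this identification in place, the remaining assertions of the corollary are immediate consequences of the two structural results already established.
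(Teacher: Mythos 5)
Your proof is correct and reflects exactly the route the paper's exposition sets up: the corollary is stated without an explicit proof because it is the formal concatenation of the recollement semi--orthogonal decomposition from Theorem \ref{T:main} (restricted to bounded derived categories) with the standard/costandard stratification of $D^b(Q-\mathsf{mod})$ from item 5 of Section 4, transported along the fully faithful exact embedding $\sI$. Your one ``delicate point''---that the triangulated hull of $\Delta_k$ computed in $D^b\bigl(\Coh(\bbX)\bigr)$ agrees with $\sI$ applied to the hull computed in $D^b(Q-\mathsf{mod})$---is handled correctly, and the identification $\mathsf{Im}(\sL\tilde\sF)\cong D^b\bigl(\Coh(\widetilde{X})\bigr)\cong \mathsf{Im}(\sR\tilde\sH)$ via full faithfulness of $\sD\tilde\sF$ and $\sD\tilde\sH$ is immediate as you say.
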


\begin{remark}
As in the setting at the beginning of this section, let $X$ be a reduced excellent  curve, $\widetilde{X} \stackrel{\nu}\lar  X$ its normalization and
$\kC := \mathit{Hom}_X(\widetilde{\kO}, \kO)$ the conductor ideal. Then $\kC$ is also a sheaf of ideals in $\widetilde\kO$, hence the scheme $S = V(\kC) \stackrel{\eta}\hookrightarrow X$ is a \emph{non--rational locus of $X$ with respect to $\nu$} in the sense of Kuznetsov and Lunts \cite[Definition 6.1]{kl}. Starting with  the Cartesian diagram
$$
\xymatrix{
\widetilde{S} \ar[r]^{\tilde\eta} \ar[d]_{\tilde{\nu}} & \widetilde{X} \ar[d]^{\nu} \\
S \ar[r]^\eta  & X
}
$$
one can construct a partial categorical resolution of singularities of $X$ obtained by the ``naive gluing'' of the
derived categories $D\bigl(\Qcoh(\widetilde{X})\bigr)$ and $D\bigl(\Qcoh(S)\bigr)$, see \cite[Section 6.1]{kl}.
It would be interesting to compare the obtained triangulated category  with the derived category $D\bigl(\Qcoh(\widehat\bbX)\bigr)$ of the non--commutative curve
$
\widehat\bbX = \left(
X,
\left(
\begin{array}{cc}
\kO & \widetilde\kO \\
\kC & \widetilde\kO
\end{array}
\right)
\right),
$
see also \cite[Section 8]{bd}. Next, \cite[Theorem 6.8]{kl} provides a recipe to construct a categorical resolution of singularities of $X$, which however, involves some non--canonical choices.
It is an interesting question to compare these categorical resolutions  with K\"onig's resolution $\bbX$ constructed in our article. Another important problem is to give an ``intrinsic description'' of the derived category $D^b\bigl(\Coh(\bbX)\bigr)$, i.e.~to provide a list of properties describing it uniquely up to a triangle  equivalence.
We follow here the analogy with non--commutative crepant resolutions, see \cite[Conjecture 5.1]{bo} and  \cite[Conjecture 4.6]{vdb}. All such resolutions are known to be derived equivalent in certain cases, see for example \cite[Theorem 6.6.3]{vdb}. Recall that K\"onig's resolution $\bbX$ is weakly crepant in the case the curve $X$ is Gorenstein.
\end{remark}

\section{Purely commutative applications}
Results of  the previous sections allow to deduce a number of interesting  ``purely commutative'' statements.
Let $X$ be a reduced projective curve over some base field $\kk$ and
$\widetilde{X} \stackrel{\nu}\lar X$ be its normalization. According to Orlov \cite{or}, the Rouquier dimension of the derived category $D^b\bigl(\Coh(\widetilde{X})\bigr)$ is equal to one. In fact, Orlov constructs an explicit vector bundle
$\widetilde\kF$ on $\widetilde{X}$ such that $D^b\bigl(\Coh(\widetilde{X})\bigr) = \langle \widetilde\kF\rangle_2$ (here we follow the notation of   Rouquier's seminal article \cite{rou}).

\begin{theorem} Let $\kF := \nu_*(\widetilde{\kF})$ be the direct image of the Orlov's generator of $D^b\bigl(\Coh(\widetilde{X})\bigr)$. Then the following results are true.
\begin{itemize}
\item Let $Z$ be the singular locus of $X$ (with respect to the reduced scheme structure) and $\kO_Z$ be the corresponding structure sheaf. Then
\begin{equation}\label{E:1stEstderDim}
D^b\bigl(\Coh(X)\bigr) = \bigl\langle \kF \oplus \kO_Z\bigr\rangle_{n+2},
\end{equation}
where $n$ is the level of $X$.
\item Let $\kS = \kO_1/\kC_1 \oplus \dots \oplus \kO_n/\kC_n$, where $\kC_k := \kHom_X(\kO_{n+1}, \kO_k)$ is the conductor ideal sheaf of the $k$--th partial normalization of $X$ for $1 \le k \le n$.  Then we have:
\begin{equation}\label{E:2ndEstderDim}
D^b\bigl(\Coh(X)\bigr) = \bigl\langle \kF \oplus \kS\bigr\rangle_{d+3},
\end{equation}
where  $d$ is  the global dimension of the quasi--hereditary algebra $Q$ associated with  $X$.
\end{itemize}
\end{theorem}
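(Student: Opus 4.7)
The plan is to lift generators from the SOD of $D^b\bigl(\Coh(\bbX)\bigr)$ furnished by Theorem~\ref{T:main} and its corollary, and then push them down to $X$ along the Verdier localization $\sD\sG\colon D^b\bigl(\Coh(\bbX)\bigr) \lar D^b\bigl(\Coh(X)\bigr)$. Since $\sD\sG$ is exact and essentially surjective, any $\sG$ with $\langle \sG\rangle_t = D^b\bigl(\Coh(\bbX)\bigr)$ automatically satisfies $\langle \sD\sG(\sG)\rangle_t = D^b\bigl(\Coh(X)\bigr)$. Combined with the standard additivity of Rouquier generation time along a semi--orthogonal decomposition -- if $\kT = \langle \kA_1, \dots, \kA_m\rangle$ and $\langle G_i\rangle_{t_i} = \kA_i$, then $\langle G_1 \oplus \cdots \oplus G_m\rangle_{t_1+\cdots+t_m} = \kT$ -- the task reduces to producing a good generator in each stratum and computing its image under $\sD\sG$.

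For (\ref{E:1stEstderDim}) I would use the fine $(n+1)$--piece SOD
\[
D^b\bigl(\Coh(\bbX)\bigr) = \bigl\langle D_1, \dots, D_n, \mathsf{Im}(\sL\tilde\sF)\bigr\rangle.
\]
Each $D_k \simeq D^b(\bar{O}_k-\mathsf{mod})$ is the derived category of a semi--simple ring, so $\Delta_k$ generates $D_k$ in a single step, while by Orlov's theorem $\sL\tilde\sF(\widetilde\kF)$ generates $\mathsf{Im}(\sL\tilde\sF) \simeq D^b\bigl(\Coh(\widetilde X)\bigr)$ in two steps. SOD--additivity then gives
\[
\bigl\langle \Delta_1 \oplus \cdots \oplus \Delta_n \oplus \sL\tilde\sF(\widetilde\kF)\bigr\rangle_{n+2} = D^b\bigl(\Coh(\bbX)\bigr).
\]
Applying $\sD\sG$: Lemma~\ref{L:calcullocale} yields $\sD\sG(\Delta_k) \cong \bar{\kO}_k$, while the isomorphism $\sG\tilde\sF \simeq \nu_*$ of Proposition~\ref{P:recollementSheaves} (together with $\sL\tilde\sF = \sD\tilde\sF$, since $\tilde\sF$ is exact) gives $\sD\sG\bigl(\sL\tilde\sF(\widetilde\kF)\bigr) \cong \kF$. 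Finally, each $\bar{\kO}_k$ is a coherent $\kO_X$--module supported on the finite reduced scheme $Z$ and killed by $\kI_Z$, hence an $\kO_Z$--module; since $\kO_Z$ splits as the direct sum of its residue skyscrapers (each a direct summand of $\kO_Z$), one reads off $\bar{\kO}_k \in \langle \kO_Z\rangle_1$. Thus $\sD\sG$ of the chosen generator lies in $\langle \kF \oplus \kO_Z\rangle_1$, proving (\ref{E:1stEstderDim}).

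For (\ref{E:2ndEstderDim}) I would instead use the coarser two--piece decomposition $D^b\bigl(\Coh(\bbX)\bigr) = \bigl\langle \mathsf{Im}(\sI), \mathsf{Im}(\sL\tilde\sF)\bigr\rangle$ and take the free module $Q$ itself as generator of the first piece. Since $\mathrm{gl.dim}(Q) = d$, projective resolutions of length at most $d$ yield $\langle Q\rangle_{d+1} = D^b(Q-\mathsf{mod})$, and SOD--additivity gives $\bigl\langle Q \oplus \sL\tilde\sF(\widetilde\kF)\bigr\rangle_{d+3} = D^b\bigl(\Coh(\bbX)\bigr)$. Lemma~\ref{L:calcullocale} supplies $\sD\sG(Q) \cong \kS$, and as before $\sD\sG\bigl(\sL\tilde\sF(\widetilde\kF)\bigr) \cong \kF$; pushing down proves (\ref{E:2ndEstderDim}).

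The conceptually substantive point is the preservation of generation time under the Verdier quotient $\sD\sG$: it follows by transporting a $t$--step extension tower word for word, but it is the engine of the whole argument. The remaining work -- verifying $\bar{\kO}_k \in \langle \kO_Z\rangle_1$ and computing $\sD\sG$ on the generating objects -- is routine given Lemma~\ref{L:calcullocale} and Proposition~\ref{P:recollementSheaves}.
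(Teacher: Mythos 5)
Your proposal is correct and follows essentially the same route as the paper: lift generators to $D^b\bigl(\Coh(\bbX)\bigr)$ using the semi--orthogonal decompositions (the fine $(n+1)$--piece one with semi--simple strata for the first bound, the coarse $2$--piece one with $D^b(Q\text{-}\mathsf{mod})$ for the second), apply Rouquier's additivity lemma together with his bound $\langle Q\rangle_{d+1}=D^b(Q\text{-}\mathsf{mod})$, and push down along the Verdier localization $\sD\sG$ using Lemma~\ref{L:calcullocale} and Proposition~\ref{P:recollementSheaves}. The only cosmetic difference is how you identify $\sD\sG(\Delta_k)$ as lying in $\mathsf{add}(\kO_Z)$ (noting $\kI_Z$ annihilates $\bar{\kO}_k$, rather than the paper's direct pushforward computation $(\nu_k)_*(\kO_{X_k}/\kI_y)\cong(\kO/\kI_x)^{\oplus l}$), which is equivalent.
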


\begin{proof}
According to Theorem \ref{T:main}, the derived category $D^b\bigl(\Coh(\bbX)\bigr)$ admits a semi--orthogonal decomposition
$$
D^b\bigl(\Coh(\bbX)\bigr) = \bigl\langle D^b(Q-\mathsf{mod}), \mathsf{Im}(\sL\tilde\sF)\bigr\rangle.
$$
Moreover, the derived category $D^b\bigl(\Coh(X)\bigr)$ is the Verdier localization of $D^b\bigl(\Coh(\bbX)\bigr)$ via the functor $\sD\sG$. This implies that whenever we have an object $\kX$ of $D^b\bigl(\Coh(\bbX)\bigr)$ with
$D^b\bigl(\Coh(\bbX)\bigr) = \langle \kX\rangle_m$ then $D^b\bigl(\Coh(X)\bigr) = \langle \sD\sG(\kX)\rangle_m$.
According to Proposition \ref{P:recollementSheaves} we have:
$$
(\sD\sG\cdot\sL\tilde\sF)(\widetilde\kF) \cong \sG\tilde\sF(\widetilde\kF) \cong \nu_*(\widetilde\kF) =: \kF.
$$
Next, Lemma \ref{L:calcullocale} implies that for all $1 \le k \le n$ we have:
$$
\sD\sG(\Delta_k) \cong \sG(\Delta_k) \cong \kO_k/\kI_k.
$$
Let $\nu_k: X_k \lar X$ be the $k$--th partial normalization of $X$ and $Z_k = \left\{y_1, \dots, y_p\right\}$ be the singular locus of $X_k$ (as usual, equipped with the reduced scheme structure). Then
$$\kO_k/\kI_k \cong (\nu_{k})_*\bigl(\kO_{X_k}/\kI_{Z_k}\bigr) \cong
(\nu_{k})_*\bigl(\kO_{Z_k}/\kI_{y_1} \oplus \dots \oplus  \kO_{Z_k}/\kI_{y_p}\bigr).
$$
Observe that if $y \in Z_k$ and $x = \nu_k(y)$ then $(\nu_{k})_*(\kO_{X_k}/\kI_y) \cong (\kO/\kI_x)^{\oplus l}$,
where $l = \mathsf{deg}\bigl[\kk_y: \kk_x\bigr]$.  Therefore,
$$
\mathsf{add}\bigl(\sG(\Delta_1) \oplus \dots \oplus \sG(\Delta_n)\bigr) = \mathsf{add}(\kO_Z)
$$
and (\ref{E:1stEstderDim}) is just a consequence of \cite[Lemma 3.5]{rou}. The  equality (\ref{E:2ndEstderDim}) follows in a similar way from Lemma \ref{L:calcullocale} and \cite[Proposition 7.4]{rou}.
\end{proof}

\begin{corollary}\label{C:boundRouquierDim}
Let $X$ be a reduced quasi--projective curve over some base field $\kk$. Then there is the following upper bound
on the Rouquier dimension of $D^b\bigl(\Coh(X)\bigr)$:
\begin{equation}\label{E:boundRouquierDim}
\mathsf{dim}\Bigl(D^b\bigl(\Coh(X)\bigr)\Bigr) \le \mathsf{min}(n+1, d +2),
\end{equation}
where $n$ is the level of $X$ and $d$ is the global dimension of the quasi--hereditary algebra $Q$ associated with
$X$.
\end{corollary}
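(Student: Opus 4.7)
The plan is to deduce the corollary from the preceding theorem in two elementary steps: a reduction from the quasi--projective curve $X$ to a projective one, and the standard translation $\mathsf{dim}\,T \le m-1$ whenever $T = \langle G\rangle_m$.

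\emph{Reduction to the projective case.} First, I would fix any projective compactification of $X$ and then partially normalize it at the finitely many points of its boundary. This yields a projective reduced curve $\bar X$ containing $X$ as a dense open subscheme with $\bar X \setminus X$ consisting of smooth points of $\bar X$. Because the level and the quasi--hereditary algebra $Q$ depend only on the completions of the local rings at the singular points (as emphasized in the final remark of Section \ref{Sec:LocalStory} and globalized in Section \ref{S:ProjectiveSetting}), and because $X$ and $\bar X$ share the same singular local rings, the invariants $n$ and $d$ attached to $\bar X$ coincide with those attached to $X$.

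\emph{Application of the theorem.} Now apply the preceding theorem to $\bar X$: there exist coherent sheaves $\kF$, $\kO_Z$, $\kS$ on $\bar X$ (where $Z$ is the common singular locus) such that
$$
D^b\bigl(\Coh(\bar X)\bigr) = \bigl\langle \kF \oplus \kO_Z\bigr\rangle_{n+2} = \bigl\langle \kF \oplus \kS\bigr\rangle_{d+3}.
$$
The restriction functor $j^*\colon D^b\bigl(\Coh(\bar X)\bigr) \lar D^b\bigl(\Coh(X)\bigr)$ along the open immersion $j\colon X \hookrightarrow \bar X$ is exact and essentially surjective, because every coherent sheaf on the Noetherian open subscheme $X$ extends to a coherent sheaf on $\bar X$. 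Such a functor transports any classical generator of the source to a generator of the target with the same generation time, hence
$$
D^b\bigl(\Coh(X)\bigr) = \bigl\langle j^*(\kF \oplus \kO_Z)\bigr\rangle_{n+2} = \bigl\langle j^*(\kF \oplus \kS)\bigr\rangle_{d+3}.
$$
The bound (\ref{E:boundRouquierDim}) is now exactly the Rouquier dimension interpretation of these two generation statements, via $\mathsf{dim}\,T \le m-1$.

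The only point requiring genuine attention is the first step: one must confirm that a compactification $\bar X$ with smooth complement and unchanged singularity data actually exists. This is the main (albeit standard) obstacle, and it is handled by combining Nagata's compactification theorem with a partial normalization at the finitely many boundary points; everything that follows is formal.
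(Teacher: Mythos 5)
Your proposal is correct and is essentially the natural (and presumably intended) argument: compactify $X$ to a projective reduced curve $\bar X$ whose boundary consists of smooth points, apply the preceding generation theorem to $\bar X$, and push the generator through the exact and essentially surjective restriction functor $j^*\colon D^b\bigl(\Coh(\bar X)\bigr)\to D^b\bigl(\Coh(X)\bigr)$, which by Thomason--Trobaugh is a Verdier localization and therefore preserves generation time. The only point worth making slightly more precise is the construction of $\bar X$: take the closure $\bar X_0$ of $X$ in projective space, let $\nu\colon\widetilde{\bar X_0}\to\bar X_0$ be the normalization, and define $\kO_{\bar X}$ to be the preimage in $\nu_*\kO_{\widetilde{\bar X_0}}$ of the summand of the skyscraper sheaf $\nu_*\kO_{\widetilde{\bar X_0}}/\kO_{\bar X_0}$ supported on $\bar X_0\setminus X$; this $\bar X=\Specan(\kO_{\bar X})$ is a projective curve, isomorphic to $\bar X_0$ (hence to $X$) over $X$, and regular along the boundary, so the singular local rings of $\bar X$ coincide with those of $X$ and the invariants $n$ and $d$ agree, as you assert.
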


\begin{remark}
In the case $X$ is rational with only simple nodes or  cusps   as singularities, the bound (\ref{E:boundRouquierDim}) has been obtained in \cite[Theorem 10]{bd}. Note that $n = 1$ and $d=0$ is this case. We do not know whether
the estimates (\ref{E:1stEstderDim}) and (\ref{E:2ndEstderDim}) are strict.
\end{remark}

\noindent
The following result gives an affirmative answer on a question posed to the first--named author by Valery Lunts.
\begin{theorem}\label{T:tilting}
For any reduced rational projective curve $X$  over some base field $\kk$ there exists a finite dimensional
quasi--hereditary $\kk$--algebra $\Lambda$ having the following properties.
\begin{itemize}
\item There exists a fully faithful exact functor
$
\Perf(X) \stackrel{\sI}\lar D^b(\Lambda-\mathsf{mod})
$ and a Verdier localization $D^b(\Lambda-\mathsf{mod})
  \stackrel{\sP}\lar D^b\bigl(\Coh(X)\bigr),
$
such that $\sP \sI \simeq \mathsf{Id}_{\Perf(X)}$.
\item The triangulated category $D^b(\Lambda-\mathsf{mod})$ is a recollement of the triangulated categories $D^b\bigl(\Coh(\widetilde{X})\bigr)$ and $D^b(Q-\mathsf{mod})$, where $Q$ is the quasi--hereditary algebra associated with $X$.
\item We have: $\mathrm{gl.dim}(\Lambda) \le d +2$, where $d = \mathrm{gl.dim}(Q)$.
\end{itemize}
\end{theorem}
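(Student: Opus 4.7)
Since $X$ is rational, the normalization $\widetilde{X}$ is a disjoint union of projective lines over $\kk$, and Beilinson's theorem supplies a tilting bundle $\widetilde{\kT}\in\Coh(\widetilde{X})$, namely a sum of copies of $\kO_{\widetilde{X}}\oplus\kO_{\widetilde{X}}(1)$, one pair on each irreducible component. The opposite endomorphism algebra $\widetilde{B}:=\End_{\widetilde{X}}(\widetilde{\kT})^{\mathrm{op}}$ is a finite product of Kronecker algebras, hence a hereditary finite--dimensional $\kk$--algebra and, in particular, quasi--hereditary. The strategy is to promote $\widetilde{\kT}$ to a tilting object $\kH$ of $D^b\bigl(\Coh(\bbX)\bigr)$ by gluing it with a tilting module of the quasi--hereditary algebra $Q$ from Theorem~\ref{T:GlobalDimensionKoenigOrder}, and then to deduce the three bullets from the tilting equivalence $\RHom_{\bbX}(\kH,-)\colon D^b\bigl(\Coh(\bbX)\bigr)\xrightarrow{\sim} D^b(\Lambda-\mathsf{mod})$ with $\Lambda:=\End_{\bbX}(\kH)^{\mathrm{op}}$.

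My candidate is $\kH:=\sD\tilde\sF(\widetilde{\kT})\oplus\sI(Q_{\mathrm{reg}})$, where $Q_{\mathrm{reg}}$ denotes $Q$ regarded as a projective module over itself. Generation of $D^b\bigl(\Coh(\bbX)\bigr)$ by $\kH$ is immediate from the semi--orthogonal decomposition $D^b\bigl(\Coh(\bbX)\bigr)=\bigl\langle D^b(Q-\mathsf{mod}),\mathsf{Im}(\sD\tilde\sF)\bigr\rangle$ of Theorem~\ref{T:main}, since each summand generates its respective piece. Of the four contributions to $\Ext^{>0}_{\bbX}(\kH,\kH)$, three vanish formally: the diagonal blocks vanish because $\widetilde{\kT}$ and $Q_{\mathrm{reg}}$ are tilting in their respective pieces and $\sD\tilde\sF$, $\sI$ are fully faithful in the derived sense, while the semi--orthogonality $\RHom_{\bbX}\bigl(\sD\tilde\sF(\widetilde{\kT}),\sI(Q_{\mathrm{reg}})\bigr)=0$ is built into the recollement~\eqref{E:recollement2}. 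Granting the fourth vanishing (discussed below), $\End_{\bbX}(\kH)$ is block lower--triangular with diagonal $\widetilde{B}^{\mathrm{op}}\times Q^{\mathrm{op}}$, so $\Lambda$ takes the shape
$$
\Lambda\cong\begin{pmatrix}\widetilde{B} & N\\ 0 & Q\end{pmatrix}
$$
for a certain $\widetilde{B}$--$Q$--bimodule $N$. The heredity chain of $Q$ from Theorem~\ref{T:GlobalDimensionKoenigOrder} concatenated with the trivial chain on the hereditary block $\widetilde{B}$ supplies a heredity chain on $\Lambda$, so $\Lambda$ is quasi--hereditary, and the standard upper--triangular estimate yields $\mathrm{gl.dim}(\Lambda)\le\mathrm{gl.dim}(Q)+\mathrm{gl.dim}(\widetilde{B})+1\le d+2$.

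The genuine obstacle is the remaining group $\Ext^{>0}_{\bbX}\bigl(\sI(Q_{\mathrm{reg}}),\sD\tilde\sF(\widetilde{\kT})\bigr)$, which no abstract recollement identity forces to vanish. I would attack it locally: since $\sI(Q)$ is supported on the finite set $Z=\mathrm{Sing}(X)$, the Ext--group splits as a sum over $x\in Z$, and at each $x$ the explicit matrix description~\eqref{E:KoenigOrderSheaf} of $\kA_x$ together with the projective resolution of $Q_x$ arising from the heredity chain of Theorem~\ref{T:GlobalDimensionKoenigOrder} reduces the computation to $\Hom$--groups into the line bundles underlying $\widetilde{\kT}$ over the semilocal ring $\widetilde{\kO}_x$. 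Should the vanishing fail in this naive form, the standard fix is to replace $\widetilde{\kT}$ by a sufficiently positive componentwise twist $\widetilde{\kT}(N)$; such a twist remains tilting on every $\mathbb{P}^1$ and kills any finite collection of obstructing Ext--groups by Serre vanishing on $\widetilde{X}$, without affecting any of the structural arguments above. Once $\kH$ is known to be tilting, the three bullets are automatic: the tilting equivalence transports the recollement~\eqref{E:recollement2} to a recollement of $D^b(\Lambda-\mathsf{mod})$ by $D^b\bigl(\Coh(\widetilde{X})\bigr)$ and $D^b(Q-\mathsf{mod})$; composing the fully faithful embedding $\Perf(X)\hookrightarrow D^b\bigl(\Coh(\bbX)\bigr)$ of Theorem~\ref{T:main} with it yields the functor $\sI$, composing with the Verdier localization $\sD\sG$ yields $\sP$, and $\sP\sI\simeq\mathsf{Id}_{\Perf(X)}$ translates the compatibility of the categorical resolution with the canonical inclusion already established in Theorem~\ref{T:main}.
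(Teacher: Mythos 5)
Your overall strategy — glue a tilting bundle on $\widetilde{X}$ with the progenerator of $Q$ via the semi--orthogonal decomposition, then read off $\Lambda$ as a triangular matrix algebra — matches the paper's. You also correctly pinpoint the one non-formal vanishing: $\Ext^{>0}_{\bbX}\bigl(\sI(Q),\sD\tilde\sF(\widetilde{\kT})\bigr)$. But the two fixes you propose for this group both fail, and that is precisely where the paper's proof contains the key idea you are missing.

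The group $\Ext^{1}_{\bbX}(\kQ,\kB)$ is genuinely nonzero. Since $\kQ$ is a torsion sheaf supported on the finite set $Z=\mathrm{Sing}(X)$ and $\kB$ is locally projective, the paper shows $\kHom_{\bbX}(\kQ,\kB)=0$ and $\kExt^{\ge 2}_{\bbX}(\kQ,\kB)=0$, so the local--to--global spectral sequence collapses to $\Ext^1_{\bbX}(\kQ,\kB)\cong\Gamma\bigl(X,\kExt^1_{\bbX}(\kQ,\kB)\bigr)$, a purely local quantity at each singular point. Remark~\ref{R:bimodule} then computes this locally as $\widetilde{O}/C_k$, which is nonzero for every $1\le k\le n$. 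In particular, twisting $\widetilde{\kT}$ by any $\kO_{\widetilde{X}}(N)$ cannot kill it: twisting does not change the local isomorphism class of $\kB$ as an $\kA_x$--module, and Serre vanishing controls $H^{>0}(X,-)$, not a local $\kExt^1$ supported at finitely many points. So your candidate $\kH=\sD\tilde\sF(\widetilde{\kT})\oplus\sI(Q)$ is \emph{never} tilting, regardless of twist, and the computation you sketch would reveal this rather than rescue it.

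The paper's resolution is to \emph{shift}: take $\kH:=\kQ[-1]\oplus\kB$. The shift turns the problematic $\Ext^1_{\bbX}(\kQ,\kB)$ into $\Hom_{D^b(\bbX)}(\kQ[-1],\kB)$, which then becomes the off-diagonal block $W$ of the triangular algebra $\Lambda\cong\smtr{Q & W\\ 0 & E}$, while all genuine higher Ext's (namely $\Ext^{\ge2}(\kQ,\kB)$, which vanish) sit in degrees $\ge 1$ of $\Ext^{\bullet}(\kQ[-1],\kB)$. This single shift is what makes $\kH$ tilting; without it the argument does not go through. The rest of your reasoning (quasi--heredity of a triangular extension with hereditary and quasi--hereditary diagonal blocks, the global dimension bound via triangular matrix rings, transport of the recollement through Keller's tilting equivalence, and the factorization $\sP\sI\simeq\mathsf{Id}$ via $\sD\sG$ and $\sL\sF$) is sound and coincides with the paper.
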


\begin{proof} According to Theorem \ref{T:main}, there exists a fully faithful exact functor
$\Perf(X) \stackrel{\sL\sF}\lar D^b\bigl(\Coh(\bbX)\bigr)$ and a Verdier localization  $D^b\bigl(\Coh(\bbX)\bigr) \stackrel{\sD\sG}\lar D^b\bigl(\Coh(X)\bigr)$ such that $\sD\sG \cdot \sL\sF \simeq \mathsf{Id}_{\Perf(X)}$.
It suffices to show that the derived category $D^b\bigl(\Coh(\bbX)\bigr)$ has a tilting object. Recall  that we have constructed a semi--orthogonal decomposition
\begin{equation}\label{sorth}
D^b\bigl(\Coh(\bbX)\bigr) = \bigl\langle \langle\kQ\rangle,\,\mathsf{Im}(\sL\tilde\sF)\bigr\rangle,
\end{equation}
where $\langle \kQ\rangle \cong D^b(Q-\mathsf{mod})$ is the triangulated subcategory generated by
$\kQ = \kA/\kJ$ and $\mathsf{Im}(\sL\tilde\sF) \cong D^b\bigl(\Coh(\widetilde{X})\bigr)$.

Since the curve $X$ is rational and projective, we have: $\widetilde{X} = \widetilde{X}_1 \cup \dots \cup
\widetilde{X}_t$, where $\widetilde{X}_k \cong \mathbbm{P}^1_{\kk}$ for all $1 \le k \le t$.
Then $$\widetilde\kB := \bigl(\kO_{\widetilde{X}_1}(-1) \oplus \kO_{\widetilde{X}_1}\bigr) \oplus \dots \oplus
\bigl(\kO_{\widetilde{X}_t}(-1) \oplus \kO_{\widetilde{X}_t}\bigr)$$ is a tilting bundle on $\widetilde{X}$ and the algebra  $E:= \bigl(\End_{\widetilde{X}}(\widetilde\kB)\bigr)^{\mathrm{op}}$ is isomorphic to the direct product of $t$ copies of the path algebra of the Kronecker quiver $\kron\bu\bu$.
Then $\kB:= \sF(\widetilde\kB) \cong \sL\sF(\widetilde\kB)$ is a tilting object in
the triangulated category $\mathsf{Im}(\sL\tilde\sF)$.

The semi--orthogonal decomposition \eqref{sorth} implies  that $\Hom_{D^b(\bbX)}(\kY, \kX) = 0$ for  any $\kX \in \langle \kQ\rangle$ and $\kY \in \mathsf{Im}(\sL\tilde\sF)$.

It is clear that $\Ext_{\bbX}^p(\kQ, \kQ) = 0$ for $p \ge 1$ and $Q \cong \End_{\bbX}(\kQ)^{\mathrm{op}}$.
Since the ideal $\kJ$ is locally projective
as a left $\kA$--module, we have: $\kExt_{\bbX}^p(\kQ,\,-\,) = 0$ for $p \ge 2$. Moreover, since
$\kB$ is locally projective and $\kQ$ is torsion, we also have vanishing $\kHom_{\bbX}(\kQ, \kB) = 0$.
Since $\Hom_{\bbX}(\kX_1, \kX_2) \cong \Gamma\bigl(X, \kHom_{\bbX}(\kX_1, \kX_2)\bigr)$ the local--to--global spectral sequence implies that
$
\Ext^p_{\bbX}(\kQ, \kB) = 0
$
unless $p = 1$ and \begin{equation}\label{E:local-to-global}
\Ext^1_{\bbX}(\kQ, \kB) \cong  \Gamma\bigl(X, \kExt^1_{\bbX}(\kQ, \kB)\bigr).
\end{equation}
Summing up, the complex $\kH := \kQ[-1] \oplus \kB$ is \emph{tilting} in the derived category $D^b\bigl(\Coh(\bbX)\bigr)$. A result of Keller \cite{kel} implies that the derived categories
$D^b\bigl(\Coh(\bbX)\bigr)$ and $D^b(\Lambda-\mathsf{mod})$ are equivalent, where $\Lambda := \bigl(\End_{D^b(\bbX)}(\kH)\bigr)^{\mathrm{op}}$. Finally, observe that
$
\Lambda
\cong
\left(
\begin{array}{cc}
Q & W \\
0 & E
\end{array}
\right),
$
where $W := \Ext^1_{\bbX}(\kQ, \kB)$ viewed as a ($Q$--$E$)--bimodule. Since the algebra $Q$ is quasi--hereditary and $E$ is directed, the algebra $\Lambda$ is quasi--hereditary as well. According to \cite[Corollary 4']{pr}, we have:
$\mathrm{gl.dim}(\Lambda) \le \mathrm{gl.dim}(Q) + 2$.
\end{proof}

\begin{remark} In a recent work \cite[Theorem 4.10]{Wei}, the following inversion of Theorem \ref{T:tilting} was obtained. Assume  $X$ is a projective curve over an algebraically closed field $\kk$ and $\Lambda$ a finite dimensional $\kk$--algebra of finite global dimension such that there exist functors
$$\Perf(X) \stackrel{\sI}\lar D^b(\Lambda-\mathsf{mod}) \stackrel{\sP}\lar D^b\bigl(\Coh(X)\bigr)
$$
  with $\sI$  fully faithful, $\sP$ essentially surjective and $\sP \sI \simeq \mathsf{Id}$. Then $X$ is rational. This result can be shown by examining the Grothendieck groups of the involved triangulated categories.
\end{remark}

\begin{remark}
In the case $X$ has only simple nodes or cusps as singularities, Theorem \ref{T:tilting} has been obtained
in \cite[Theorem 9]{bd}. See also \cite[Definition 3]{bd} for an explicit description of the algebra $\Lambda$ is this case.
\end{remark}

\begin{remark}\label{R:bimodule}
Now we outline how the $Q$--$E$--bimodule $W  = \Ext^1_{\bbX}(\kQ, \kB)$ from the proof of Theorem \ref{T:tilting} can be explicitly determined. The isomorphism
(\ref{E:local-to-global}) implies that $W$ can be computed locally and we may assume that $X = \Spec(O)$ and
$O$ is a complete local ring. We follow the notation of Section \ref{Sec:LocalStory}. For any $1 \le k \le n$ the
left $A$--module $R_k:= Q e_k$ has projective resolution
$$
0 \lar Je_k \lar A e_k \lar R_k \lar 0.
$$
This yields the following  isomorphisms of $\widetilde{O}$--modules:
\begin{equation}\label{E:localcomputation}
W_k = \Ext^1_A(R_k, P) \cong  \dfrac{\Hom_A(J e_k, A e)}{\Hom_A(A e_k, A e)} \cong \dfrac{\Hom_O(C_k, \widetilde{O})}{\Hom_O(O_k, \widetilde{O})} \cong  \dfrac{C_k^\vee}{\widetilde{O}},
\end{equation}
where $P = A e$ and  $C_k = \Hom_{O}(\widetilde{O}, O_k) = \Hom_{O_k}(\widetilde{O}, O_k)$ is the conductor ideal  of the
partial normalization
$O_k \subset \widetilde{O}$. Since  $\widetilde{O}$ is regular, we have a (non--canonical) isomorphism of
$\widetilde{O}$--modules $\dfrac{C_k^\vee}{\widetilde{O}} \cong \dfrac{\widetilde{O}}{C_k}$.
Since $\widetilde{O} \cong \End_A(P)$, this leads to a description of the right $E$--action on $W$. To say more about the left action of $Q$ on $W$, we need an explicit description of the algebra $Q$.
\end{remark}

\section{Quasi--hereditary algebras associated with simple curve singularities}
Let $\kk$ be an algebraically closed field of characteristic zero.
In this section we compute the algebra $\kQ$  for the
 \emph{simple plane curve singularities} in the sense of Arnold \cite{avg}. These singularities are in
 one--to--one correspondence with the simply laced Dynkin graphs.

\begin{proposition}
The algebra $Q$ associated with  the simple singularity $O = \kk\llbracket u, v\rrbracket/(u^2 - v^{m+1})$
of type $A_{m}$ is the path algebra of  the following quiver
\begin{align*}
      &\xymatrix{ 1 \ar@/_/[r]_{\be_1}& 2\ar@/_/[l]_{\al_1} \ar@/_/[r]_{\be_2}  &
  	3 \ar@/_/[l]_{\al_2} \ar@{.}[r] &  (n-1) & n \ar@/_/[l]+<4ex,.9ex>*{}_-{\al_{n-1}}
  	\ar@{<-}@/^/[l]+<4ex,-.9ex>*{}^-{\be_{n-1}}	}
 \end{align*}
 where $n=\big[\frac{m+1}2\big]$ with the relations
 \begin{align*}
   	&\ \be_k\al_k=\al_{k+1}\be_{k+1}\ \text{ if }\ 1\le k<n-1,\\
  	&\ \be_{n-1}\al_{n-1}=0.
 \end{align*}
 $\mathsf{gl.dim}(Q) = 0$
  for $m = 1$ and $2$ and $\mathsf{gl.dim}(Q) = 2$ for all $m\ge 3$.
\end{proposition}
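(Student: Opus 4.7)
The plan is to compute the partial normalizations $O_1 \subset O_2 \subset \dots \subset O_{n+1}$ of the $A_m$-singularity explicitly, plug these into the matrix description of $Q$ furnished by Theorem~\ref{T:GlobalDimensionKoenigOrder}, and then identify the resulting algebra with the path algebra of the quiver. The cases $m = 2n-1$ (two branches, $u^2-v^{2n}$) and $m=2n$ (one branch, $u^2-v^{2n+1}$) require slightly different bookkeeping but lead to uniform algebraic data.

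For the overrings: in the two-branched case one takes $\widetilde O = \kk\llbracket s\rrbracket \times \kk\llbracket t\rrbracket$ with $v \mapsto (s,t)$ and $u \mapsto (s^n, -t^n)$, and an induction using Proposition~\ref{P:normalizelocal} shows that $O_i = \{(f,g) \in \widetilde O \;|\; f^{(p)}(0) = g^{(p)}(0)\ \text{for}\ p = 0, \dots, n-i\}$. In the irreducible case a parallel induction using the numerical semigroup $\langle 2, 2(n-i)+3\rangle$ gives $O_i = \kk\llbracket t^2, t^{2(n-i)+3}\rrbracket$ for $i \leq n$. From this one computes the conductors $I_{n+1,j}$ explicitly (respectively $s^{n-j+1}\kk\llbracket s\rrbracket \oplus t^{n-j+1}\kk\llbracket t\rrbracket$ and $t^{2(n-j)+2}\kk\llbracket t\rrbracket$), and more generally $I_{i,j} = \Hom_{O_j}(O_i, O_j)$ admits a similar explicit description. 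In both cases the diagonal block $O_j/I_{n+1,j}$ is a local truncated polynomial ring isomorphic to $\kk[x]/x^{n-j+1}$, with $x$ the class of $v$; the off-diagonal block $I_{i,j}/I_{n+1,j}$ is cyclic, of dimension $n-i+1$ for $i>j$ and $n-j+1$ for $i<j$.

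To identify the path algebra, I send $\bar{e}_k$ to the $k$-th diagonal idempotent, $\alpha_k$ to the class of $1 \in O_{k+1}$ sitting in the $(k, k+1)$-block, and $\beta_k$ to the class of $v \in I_{k+1,k}$ sitting in the $(k+1, k)$-block. Direct matrix multiplication shows that the two length-two paths $\beta_k\alpha_k$ and $\alpha_{k+1}\beta_{k+1}$ are both represented by the class of $v$ in the diagonal block $\kk[x]/x^{n-k}$, yielding the first family of relations; the relation $\beta_{n-1}\alpha_{n-1} = 0$ records that $v$ lies in the conductor $I_{n+1,n}$ and hence vanishes in $O_n/I_{n+1,n} \cong \kk$. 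A total dimension count $\sum_{i \leq j}(n-j+1) + \sum_{i>j}(n-i+1)$ agreeing on both sides shows that the induced surjection from the path algebra is an isomorphism.

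For the global dimension: when $m \in \{1,2\}$ one has $n = 1$, so the quiver has one vertex and no arrows, and $Q \cong \kk$ is semisimple. For $m \geq 3$ I would build minimal projective resolutions of the simples $S_1, \dots, S_n$ from the quiver presentation. The key step is the identification of $\Omega S_n = \mathsf{rad}(P_n)$ as the image of a projective cover $P_{n-1} \twoheadrightarrow \mathsf{rad}(P_n)$, $\bar{e}_{n-1} \mapsto \alpha_{n-1}$, whose kernel $Q\beta_{n-1}$ is isomorphic to $P_n$ via $\bar{e}_n \mapsto \beta_{n-1}$; the isomorphism is forced precisely by the relation $\beta_{n-1}\alpha_{n-1}=0$. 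An analogous analysis for $1 \leq k < n$, using $\beta_k\alpha_k = \alpha_{k+1}\beta_{k+1}$ to exhibit the syzygy of $\mathsf{rad}(P_k)$ as a direct sum of projectives, shows $\mathrm{pd}(S_k) \leq 2$ for every $k$, with equality for at least one vertex whenever $n \geq 2$. The main obstacle throughout is the careful tracking of the bimodule structure under matrix multiplication and the verification that the loop relations cause each syzygy chain to stabilize at a projective after exactly two steps; once this combinatorial check on the quiver is done, the global dimension statement follows.
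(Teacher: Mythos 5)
Your proof is correct, but it follows a genuinely different route from the paper's. The paper invokes the classification of maximal Cohen--Macaulay modules over the $A_m$-singularity: it observes that $O_1 \oplus \cdots \oplus O_{n+1}$ is an additive generator of the category of MCM $O$-modules, so that $A$ is the Auslander algebra, whose quiver with relations is the Auslander--Reiten quiver with mesh relations (citing Bass, Leuschke--Wiegand, Yoshino); $Q$ is then read off by deleting the vertex (or two vertices, for even $m$) corresponding to $O_{n+1}$. You instead compute everything by hand: you describe the chain $O_1 \subset \cdots \subset O_{n+1}$ explicitly inside $\widetilde O$ (with the two-branch and one-branch cases treated via the product $\kk\llbracket s\rrbracket\times\kk\llbracket t\rrbracket$ and the numerical semigroup $\langle 2, 2(n-i)+3\rangle$ respectively), compute the conductors $I_{i,j}$ and the resulting matrix blocks of $Q$, then exhibit the quiver by sending $\alpha_k$ to $1$ in block $(k,k+1)$ and $\beta_k$ to $v$ in block $(k+1,k)$ and verifying the relations by matrix multiplication. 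The computations check out: the diagonal blocks are indeed $\kk[v]/v^{n-j+1}$, the block dimensions are as you state, and both $\beta_k\alpha_k$ and $\alpha_{k+1}\beta_{k+1}$ land on $v$ in block $(k+1,k+1)$. Your approach is self-contained and elementary, requiring no Auslander--Reiten theory; the paper's is shorter but relies on substantial known results. For the global dimension argument both proofs use minimal projective resolutions of the simples, which the paper writes out in full while you sketch only the case of $S_n$; note that in the paper's resolutions $\mathrm{pd}(U_1)=1$ while $\mathrm{pd}(U_k)=2$ for $k\ge 2$, so your claimed "$\mathrm{pd}(S_k)\le 2$ for every $k$ with equality for at least one vertex when $n\ge 2$" is consistent. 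Two small points to tighten: (i) surjectivity of the map from the path algebra onto $Q$ deserves a sentence (the off-diagonal blocks are cyclic over the diagonal rings, generated by $v^{i-j}$ or by $1$, and these are hit by the obvious composite paths, so the map is onto; your dimension count then gives injectivity); (ii) exactness of the complex $0\to P_n\to P_{n-1}\to P_n\to U_n\to 0$ is not ``forced precisely by'' $\beta_{n-1}\alpha_{n-1}=0$ alone --- the relation gives that the composite is zero, but injectivity of the first map and exactness in the middle still need to be checked, as the paper implicitly does.
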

\begin{proof} A straightforward computation shows that $O$ has level $n$. Moreover,
$O_1 \oplus \dots \oplus O_{n+1}$ is the additive generator of the category of maximal Cohen--Macaulay modules, see \cite[Section 7]{Bass}, \cite[Section 13.3]{lw} or
\cite[Section 9]{Yoshino}. It clear that $Q = \kk$ for $m=1$ and $2$.
For $m \ge 3$ we obtain a description of $Q$ in terms of a quiver with relations just taking
first the \emph{Auslander--Reiten quiver} of the category of maximal Cohen--Macaualay $O$--modules subject to the mesh relations (see again \cite[Section 13.3]{lw} or
\cite[Section 9]{Yoshino}), and then deleting the vertex (or  two vertices,  depending whether $m$ is odd or even) corresponding
to the normalization $O_{n+1}$.

The minimal projective resolutions of the simple $Q$-modules $\,U_k$ corresponding to the $k$-th vertex  are:
 \begin{align*}
  &0\to P_2 \xarr{\be_1} P_1 \to U_1\to0,\\
  &0\to P_k \xarr{\smtr{\al_k\\-\be_{k-1}}} P_{k+1}\oplus P_{k-1}
  	\xarr{\smtr{\be_k&\al_{k-1}}} P_k\to U_k\to0 \ \text{ if }\ 1<k<n,\\
  &0\to P_n \xarr{\be_{n-1}} P_{n-1}\xarr{\al_{n-1}} P_n \to U_n\to 0\\
 \end{align*}
Therefore, $\mathsf{gl.dim}(Q) = 2$ for $m \ge 3$ as claimed.
\end{proof}

\begin{remark}\label{R:compA2n}
Assume $O = \kk\llbracket u, v\rrbracket/(u^2 - v^{2n+1})$. Then
$O \cong  \kk\llbracket t^2, t^{2n+1}\rrbracket$ and in this notation we have:
$O_1 = O$, $O_{n+1} = \widetilde{O} = \kk\llbracket t\rrbracket$ and $O_k = \kk\llbracket t^2, t^{2n-2k+3}\rrbracket$ for $1 \le k \le n$.
The morphism $O_k \stackrel{\beta_k}\lar  O_{k+1}$ is identified with the canonical embedding and
 $O_{k+1} \stackrel{\alpha_k}\lar  O_{k}$ is given by the multiplication with $t^2$. The $k$-th conductor ideal $C_k =
 \Hom_{O}(\widetilde{O}, O_k)$ has the following description: $C_k = t^{2(n-k+1)}\cdot \widetilde{O}$.
 Now we can give a full description of the bimodule $W = \Ext^1_A(Q, P)$ from Remark \ref{R:bimodule}.
 \begin{itemize}
 \item As a (right) $\widetilde{O}$--module, it has a decomposition
 \begin{align*}
 W &  \cong  \Ext^1_A(R_1, P) \oplus \Ext^1_A(R_2, P) \oplus \dots \oplus
   \Ext^1_A(R_n, P) \\ & \cong \kk\llbracket t\rrbracket/(t^{2n}) \oplus \kk\llbracket t\rrbracket/(t^{2n-2}) \oplus \dots \oplus
 \kk\llbracket t\rrbracket/(t^{2}),
 \end{align*}
 where $R_k = Q e_k$ for $1 \le k \le n$.
 \item However, as  a left $Q$--module, $W$  is generated just by two elements
 $\gamma_1, \gamma_2\in \Ext^1_A(R_1, P)$  satisfying the following relations:
 $$
 \gamma_1 t = \gamma_2 \quad \mbox{and} \quad \gamma_2 t = \alpha_1 \beta_1 \gamma_1.
 $$
 For $n =1$ (simple cusp) the last relation has to be understood as $\gamma_2 t = 0$ since
 $\alpha_1 \beta_1 = 0$ in this case.
 \end{itemize}
 Assume now $X$ is rational, irreducible  and projective with a singular point $p \in X$ of type $A_{2n}$. Let
 $\mathbbm{P}^1 = \widetilde{X} \stackrel{\nu}\lar X$ be the normalization of $X$ and $\nu^{-1}(p) = (0: 1)$
 with respect to the homogeneous coordinates $z_0, z_\infty \in \Gamma\bigl(\mathbbm{P}^1, \kO_{\mathbbm{P}^1}(1)\bigr)$.
 Then in the algebra $\Lambda$ from Theorem \ref{T:tilting} we have the following relations:
$$
  \gamma_1 z_0 = \gamma_2 z_\infty \quad \mbox{and} \quad
  \gamma_2 z_0 =\alpha_1\beta_1\gamma_1 z_\infty.
$$
Again, for $n = 1$ the last relation has to be understood as $\gamma_2 z_0 = 0$, what is consistent with \cite[Definition 3]{bd}. \qed
\end{remark}

Omitting the details, we state now the descriptions of the algebra $Q$ for $D_m$ and $E_l$ singularities ($m \ge 4$ and $l = 6,7$ or $8$).

\begin{proposition} Let $O = \kk\llbracket u,v\rrbracket/(u^2 v - v^{m-1})$. Then $O$ has level $n = \big[\frac{m}{2}\big]$ and
the quasi--hereditary algebra $Q$ is isomorphic to the path algebra of the following quiver
\[
    \xymatrix{  1\ar@/_/[r]^{\be_1} \ar@/_1pc/[r]_{\be'} &
    2\ar@/_/[r]_{\be_2} \ar@/_1pc/[l]_{\al_1}
    & 3\ar@/_/[r]_{\be_3} \ar@/_/[l]_{\al_2} &
  	4 \ar@/_/[l]_{\al_3} \ar@{.}[r] & (n-1) & n \ar@/_/[l]+<4ex,.9ex>*{}_{\al_{n-1}}
  	\ar@{<-}@/^/[l]+<4ex,-.9ex>*{}^{\be_{n-1}} }
 \]
 with the relations
  \begin{align*} 	&\ \be_k\al_k=\al_{k+1}\be_{k+1}\ \text{ if }\ 1\le k<n-1,\\
  	&\ \be_{n-1}\al_{n-1}=0,\\
  	&\ \be'\al_1=0,\\
  	&\ \be_2\be'=0.
 \end{align*}
 We have: $\mathsf{gl.dim}(Q) = 2$ if $n =2$ (i.e.~for types $D_4$ and $D_5$) and $\mathsf{gl.dim}(Q) = 3$ for $n \ge 3$.
 \end{proposition}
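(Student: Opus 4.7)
The plan is to follow the same strategy used for the $A_m$ case, namely: compute the chain of overrings explicitly, identify this chain with the complete list of indecomposable maximal Cohen--Macaulay $O$--modules (removing the normalization), invoke Auslander--Reiten theory to read off the quiver with relations, and then compute projective resolutions of simples to pin down the global dimension.

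First I would work concretely with $O = \kk\llbracket u,v\rrbracket/(u^2v - v^{m-1})$. Using the description of $O^\sharp$ from Proposition \ref{P:DetailsKoenigResol}, I would iteratively compute $O_{i+1} = \End_O(I_i)$ as a subring of the total ring of fractions $K$, and verify that the chain $O_1 \subset O_2 \subset \dots$ stabilizes at $\widetilde O$ after exactly $n = [m/2]$ steps. This gives the level and the list of partial normalizations. In parallel, by the classification of MCM modules over ADE plane curve singularities (see \cite[Section 7]{Bass}, \cite[Chapter 13]{lw}, \cite[Section 9]{Yoshino}), the rings $O_1,\dots,O_{n+1}$, together with one or two ``extra'' rank one MCM modules supported over the branch of the singularity, form the complete list of indecomposable MCM $O$--modules.

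Next I would read off the quiver of $\End_O(O_1\oplus\cdots\oplus O_{n+1})^{\mathrm{op}}$ by starting from the stable Auslander--Reiten quiver of $\mathsf{MCM}(O)$, which for the $D_m$ singularity is the known $D_m$--shaped AR translation quiver with a branch point at vertex $1$. By the theorem of Auslander and Reiten, $\End_O\bigl(\bigoplus_{M \in \mathsf{MCM}} M\bigr)^{\mathrm{op}}$ is the path algebra of this AR quiver modulo the mesh ideal. Deleting the vertex (or the two conjugate vertices when $m$ is even) corresponding to the normalization $O_{n+1}$, exactly as in the $A_m$ case, yields $Q$. The double arrow $\be_1,\be'$ at vertex $1$ reflects the branching of the $D_m$ AR quiver, and the listed relations $\be_k\al_k = \al_{k+1}\be_{k+1}$, $\be_{n-1}\al_{n-1}=0$, $\be'\al_1 = 0$ and $\be_2\be' = 0$ are precisely the mesh relations at the surviving vertices (the two zero relations come from meshes involving the deleted vertex, whose contribution drops out).

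Finally, I would verify the global dimension by writing down minimal projective resolutions of the $n$ simple $Q$--modules $U_k$, mimicking the $A_m$ computation. For $k\neq 1$ the meshes produce resolutions of length $2$ of the form $0\to P_k \to P_{k+1}\oplus P_{k-1} \to P_k \to U_k \to 0$ (with obvious modifications at the endpoints), giving projective dimension $2$. For $U_1$, however, the double arrow forces a length $3$ resolution once $n\ge 3$, because the mesh relation $\be'\al_1=0$ and $\be_2\be'=0$ together make the kernel of the obvious map one step longer; this accounts for $\mathrm{gl.dim}(Q) = 3$ in that range, while for $n = 2$ the shortness of the quiver prevents the third syzygy from appearing, leaving $\mathrm{gl.dim}(Q) = 2$. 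The main obstacle, and the place where one must be careful rather than routine, is bookkeeping the mesh relations after deletion of the normalization vertex in both parities of $m$ (the cases $m$ even and $m$ odd behave slightly differently because one or two vertices are removed), and verifying that no further relations are needed to cut out $Q$ inside the path algebra of the displayed quiver.
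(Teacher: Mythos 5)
Your plan transplants the $A_m$ argument to $D_m$, but the key ingredient that makes that argument work for $A_m$ fails here. In the $A_m$ case the paper can identify $A = \End_O(O_1\oplus\cdots\oplus O_{n+1})^{\mathrm{op}}$ with the Auslander algebra of $\mathsf{MCM}(O)$ and then read off $Q$ by deleting the vertices of $O_{n+1}$ precisely because for $A_m$ the chain of overrings $O_1,\dots,O_{n+1}$ is an \emph{additive generator} of $\mathsf{MCM}(O)$; the paper says this explicitly and it is specific to type $A$. For the $D_m$ singularity $O = \kk\llbracket u,v\rrbracket/(u^2v - v^{m-1})$ this is false, and your own phrasing (``together with one or two extra rank one MCM modules'') already concedes it. As a concrete check, take $m=4$, i.e.\ three distinct lines through the origin over $\CC$. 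The module $R/(xy)$ (the union of two of the three branches, a node) is torsion-free, has local endomorphism ring hence is indecomposable, and has rank vector $(1,1,0)$; on the other hand $O_1$ and $O_2$ have rank $(1,1,1)$ and the summands of $O_3 = \widetilde{O}$ have ranks $(1,0,0),(0,1,0),(0,0,1)$. So $R/(xy)\notin\mathsf{add}(O_1\oplus O_2\oplus O_3)$, and therefore $A$ is \emph{not} the Auslander algebra of $\mathsf{MCM}(O)$.

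Because of this, the step ``start from the AR quiver with mesh relations and delete the normalization vertex'' does not produce $Q$. You would first have to pass from the Auslander algebra $\Gamma$ to the minor $A = e_{\mathrm{chain}}\Gamma e_{\mathrm{chain}}$ (an idempotent subalgebra, not a quotient), which is not obtained by simply erasing vertices and keeping the surviving mesh relations — paths that factor through deleted vertices contribute new relations in $A$. Only afterwards could you pass to $Q = A/AeA$. Your proposal conflates these two quite different operations and applies the vertex-deletion recipe directly to $\Gamma$, which yields the wrong algebra. (A smaller slip in the same direction: for $m$ even the normalization of $u^2v - v^{m-1} = v(u-v^{(m-2)/2})(u+v^{(m-2)/2})$ has \emph{three} components, and for $m$ odd it has two, so the number of normalization vertices to remove is two or three, not one or two.) A correct route has to compute $A$ directly from explicit valuation descriptions of the chain $O_1\subset\cdots\subset O_{n+1}$ and the $\Hom$-bimodules $I_{i,j}$, exactly as the paper does in Proposition~\ref{P:KoenigOrderMatrDescr}, rather than leaning on Auslander--Reiten theory for the full MCM category.
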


 \begin{proposition}\label{P:typeE}
The $E_6$--singularity $\kk\llbracket u,v\rrbracket/(u^3  + v^4)$ has level two and the associated algebra $Q$
is given
 by the quiver with relations
 \[
  \xymatrix{  1\ar@/_/[r]^{\be} \ar@/_1pc/[r]_{\be'} &
    2 \ar@/_1pc/[l]_{\al} } \qquad \be\al=\be'\al=0.
 \]
Its global dimension is equal to $2$.
The $E_7$--singularity $\kk\llbracket u,v\rrbracket/(u^3  + u v^3)$ and $E_8$--singularity $\kk\llbracket u,v\rrbracket/(u^3  + v^5)$
have both  level $3$. In both cases,  associated algebra $Q$ is given by the quiver with relations
 \[
   \xymatrix{  1\ar@/_/[r]^{\be_1} \ar@/_1pc/[r]_{\be'} &
    2\ar@/_/[r]_{\be_2} \ar@/_1pc/[l]_{\al_1}
    & 3 \ar@/_/[l]_{\al_2} } \qquad  \be_1\al_1=\al_2\be_2,\ \be_2\al_2=\be_2\be'=\be'\al_1=0.
 \]
 and its global dimension is equal to $3$.
\end{proposition}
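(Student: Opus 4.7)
The argument is a direct case-by-case verification, applying the general machinery of Section~\ref{Sec:LocalStory} to each of the three plane curve singularities. For each $O$ the plan proceeds in three stages: first, compute the tower $O = O_1 \subset O_2 \subset \dots \subset O_{n+1} = \widetilde{O}$ together with the ideals $I_{i,j}$; second, read off $Q = A/J$ from the matrix presentation (\ref{E:KoenigOrder}) together with the explicit shape of $J$ derived in the proof of Theorem~\ref{T:GlobalDimensionKoenigOrder}; third, identify $Q$ with a path algebra by specifying representatives for the arrows and verifying the relations, then compute projective resolutions of the simple modules to pin down $\mathrm{gl.dim}(Q)$.

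For the irreducible cases $E_6$ ($O = \kk\llbracket t^3,t^4\rrbracket$) and $E_8$ ($O = \kk\llbracket t^3,t^5\rrbracket$) the first stage is purely combinatorial: since $O = \kk\llbracket S\rrbracket$ for a numerical semigroup $S$, Proposition~\ref{P:normalizelocal} translates into the semigroup operation $S \mapsto S^\sharp = \bigl\{ m \in \mathbb{N} \mid m + (S \setminus \{0\}) \subseteq S \setminus \{0\}\bigr\}$, and iterating gives $O_2 = \kk + t^3\widetilde{O}$ for $E_6$ (so $n=2$) and $O_2 = \kk + \kk t^3 + t^5 \widetilde{O}$, $O_3 = \kk + t^2 \widetilde{O}$ for $E_8$ (so $n=3$). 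The ideals $I_i = \mathfrak{m}_i$ and the conductors $I_{n+1,k}$ are then read off from the Frobenius numbers of $S_k$. The reducible case $E_7 = u(u^2+v^3)$ requires more care: the normalization is $\widetilde{O} = \kk\llbracket v_1\rrbracket \times \kk\llbracket t\rrbracket$ with $u \mapsto (0,t^3)$, $v \mapsto (v_1,-t^2)$, and one determines $O^\sharp$ directly from the condition that $x\cdot u$ and $x\cdot v$ lie in $\mathfrak{m}$, working through the gluing constraints at the origin. A straightforward calculation again produces a chain of length $3$, matching the level claim.

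With the tower in hand, the quotient $Q = A/AeA$ has its $(i,j)$-block equal to $A_{ij}/I_{n+1,j}$, and the dimensions are tallied in a finite-dimensional $\kk$-vector space computation. For each case one picks specific generators: the arrow $\beta_k$ corresponds to the class of $1 \in O_{k+1}$ inside $O_{k+1}/I_{n+1,k+1}$ (i.e.\ the inclusion $O_k \hookrightarrow O_{k+1}$), the arrow $\alpha_k$ corresponds to a chosen generator of $\mathfrak{m}_k/I_{n+1,k}$ as an $O_{k+1}$-module, and the auxiliary arrow $\beta'$ (present in $E_6, E_7$) records the fact that $O_2/O_1$ is \emph{two}-dimensional---reflecting either the presence of a second branch ($E_7$) or an extra module generator ($E_6$). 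Verifying the listed relations $\beta_k\alpha_k = \alpha_{k+1}\beta_{k+1}$, $\beta_{n-1}\alpha_{n-1}=0$, $\beta'\alpha_1 = 0$, $\beta_2\beta' = 0$ then reduces to multiplying the explicit matrix representatives modulo the conductor ideals, using that $I_{4,k}\subset I_{3,k}$ and that the relevant products vanish in $\widetilde{O}/I_{n+1,k}$ for dimensional reasons. A dimension count confirms no further relations are needed.

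The global dimension is then computed from minimal projective resolutions of the simple $Q$-modules $U_k$ at each vertex, in the style of the $A_m$ computation displayed above the statement. The main obstacle I would expect is the bookkeeping for $E_7$: establishing a clean description of the chain $O_2 \subset O_3$ inside the reducible ring $\widetilde{O} = \kk\llbracket v_1\rrbracket \times \kk\llbracket t\rrbracket$ and, simultaneously, exhibiting the correct class of $\beta' \in e_2 Q e_1$ whose products with $\alpha_1$ and $\beta_2$ both vanish. Once the normal form of $O_2$ and the conductor chain are explicit, the remaining verification is mechanical.
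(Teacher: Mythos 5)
The paper itself gives no proof of this proposition (it explicitly writes ``Omitting the details'' just before the $D$ and $E$ statements), so your proposal supplies an argument the authors left out. Your approach --- compute the overring tower $O_1 \subset \cdots \subset O_{n+1}$, the conductors $I_{n+1,k}$, and then read $Q=A/J$ off the matrix presentation and verify relations by hand --- is the right one, and in fact is essentially forced: the shortcut the paper uses for $A_m$ (take the Auslander--Reiten quiver of the MCM category with mesh relations and delete the vertices for $O_{n+1}$) hinges on $O_1\oplus\cdots\oplus O_{n+1}$ being the additive generator of the maximal Cohen--Macaulay category, which is special to type $A$; for $D$ and $E$ there are indecomposable MCM modules that are not overrings, so $A$ is not the Auslander algebra and the AR-quiver argument does not transfer. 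Your semigroup computations are correct: for $E_6$, $S=\langle 3,4\rangle$ has gaps $\{1,2,5\}$ and $S^\sharp=\{0\}\cup[3,\infty)$, giving $O_2=\kk+t^3\widetilde{O}$, level $2$; for $E_8$, $S=\langle3,5\rangle$ has gaps $\{1,2,4,7\}$ and the tower is $O_2=\kk+\kk t^3+t^5\widetilde{O}$, $O_3=\kk+t^2\widetilde{O}$, $O_4=\widetilde{O}$, level $3$. The resulting dimension counts ($\dim_\kk Q=7$ for $E_6$, $16$ for $E_8$, with the matrix entries $O_i/C_i$ and $\mathfrak m_i/C_i$ having the expected sizes) tally exactly with the number of nonzero paths in the stated quivers-with-relations, and the projective resolutions of simples give $\mathrm{gl.dim}(Q)=2$ and $3$ as claimed. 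You honestly flag $E_7$ as the delicate case --- the reducible ring $\widetilde{O}=\kk\llbracket v_1\rrbracket\times\kk\llbracket t\rrbracket$ with $u\mapsto(0,t^3)$, $v\mapsto(v_1,-t^2)$ is the correct parametrization, and pinning down $O_2\subset O_3$ and the representative for $\beta'$ there is genuine bookkeeping, but the strategy is sound and will close. One small presentational caveat: when you say ``$\beta_k$ corresponds to the class of $1\in O_{k+1}$'' versus ``$\alpha_k$ corresponds to a generator of $\mathfrak m_k$'', be careful with the $\End^{\mathrm{op}}$ convention in the definition of $A$ and the $e_jQe_i$ versus $e_iQe_j$ orientation of arrows; the two roles may need to be swapped depending on how you read paths, though this does not affect the isomorphism type of $Q$.
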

\begin{remark}
  The algebras from Proposition \ref{P:typeE} coincide with those for $D_m$, where $m=4$ or $5$ for $E_6$ and $m=6$ or $7$
  for $E_7$ and $E_8$.
\end{remark}

\begin{example}
Let $X$ be a rational projective curve with two irreducible components $X_1$ and $X_2$ and three
 singular points $x_1\in X_1$ of type $E_6$,
 $x_2\in X_1\cap X_2$ of type $D_7$ and $x_3\in X_2$ of type $A_5$.
Proceeding as explained  in Remark \ref{R:bimodule}  and outlined in Remark \ref{R:compA2n}, we conclude that
 the quiver of the algebra
 $\Lambda$ from Theorem \ref{T:tilting} is
 \[
  \xymatrix@R=1em{ &&& 1_1 \ar@/^1em/[r]^{\al_1} & 2_1 \ar@/^/[l]_{\be_1'} \ar@/^1em/[l]^{\be_1} \\
  -1_1 \ar@/^/[r]^{\xi_1} \ar@/_/[r]_{\eta_1} & 0_1 \ar@/^1em/[urr]|{\ga_{11}} \ar[urr]|{\ga_{12}}
  \ar@/_1em/[urr]|{\ga_{13}} \ar[drr]|{\ga_{23}} \\
  &&& 1_2 \ar@/^1em/[r]^{\al_{21}} & 2_2 \ar@/^/[l]_{\be_{21}'} \ar@/^1em/[l]^{\be_{21}} \ar@/^/[r]^{\al_{22}}
  & 3_2  \ar@/^/[l]^{\be_{22}} \\
  -1_2 \ar@/^/[r]^{\xi_2} \ar@/_/[r]_{\eta_2} & 0_2 \ar@/^/[urr]|{\ga_{21}} \ar@/_/[urr]|{\ga_{22}}
  \ar@/_/[drr]|{\ga_{31}} \ar@/^/[drr]|{\ga_{32}} \\
  &&& 1_3 \ar@/^/[r]^{\al_{31}} & 2_3 \ar@/^/[l]^{\be_{31}} \ar@/^/[r]^{\al_{32}}
  & 3_3  \ar@/^/[l]^{\be_{32}}
  }
 \]
 \end{example}


\begin{thebibliography}{99}

\bibitem{avg}%+
 Arnold~V.~I, Varchenko~A.~N., Gusein-Zade~S.~M.
 Singularities of Differentiable Maps, Vol.\,1.
 Nauka, Moscow, 1982.
 (English Translation: Birkh\"auser, 2012).
\bibitem{ar}
Auslander~M. \& Roggenkamp~K. W.
A characterization of orders of finite lattice type.
Invent. Math. 17 (1972), 79--84.

\bibitem{bo} Bondal~A., Orlov~D. Derived categories of coherent sheaves, Proceedings of the International Congress of Mathematicians, Vol.~II (Beijing, 2002), Higher Ed.~Press, 2002,47--56.


\bibitem{Bass} Bass~H.
On the ubiquity of Gorenstein rings.
Math. Z. 82 (1963) 8--28.

\bibitem{bd}
 Burban~I. \& Drozd~Y.
 Tilting on non--commutative rational projective curves.
 Math. Ann. 351 (2011), 665--709.

 \bibitem{bdg}
Burban~I., Drozd~Y. \& Gavran~V. Minors of non--commutative schemes, arXiv:1501.06023 \,[math.AG].

\bibitem{CPS}
Cline~E., Parshall~B. \& Scott L.
Finite--dimensional algebras and highest weight categories.
J. Reine Angew. Math. 391 (1988), 85--99.

\bibitem{deJongPfister} de Jong~T. \&  Pfister~G.
Local Analytic Geometry.  Advanced Lectures in Mathematics, Vieweg  2000.

\bibitem{ddone}
Dieudonn\'e~J.  Topics in local algebra. University of Notre Dame Press (1967).

\bibitem{dr}
Dlab~V. \& Ringel~C.-M.
Quasi-hereditary algebras. Illinois J. Math. 33, no.~2  (1989), 280--291.

 \bibitem{com}
 Drozd~Y.
 Ideals of commutative rings.
 Mat. Sbornik, 101 (1976), 334--348.

 \bibitem{dk}
Drozd~Y. \& Kirichenko~V. Finite--dimensional algebras, with an appendix by Vlastimil Dlab, Springer, 1994.



\bibitem{kel}%+
Keller~B.
Deriving DG-categories.
Ann. Sci. \'E.N.S. 27 (1994), 63--102.

\bibitem{koe}
 K\"onig~S.
 Every order is the endomorphism ring of a projective module over a
 quasi–hereditary order.
 Commun. Algebra 19 (1991), 2395--2401.

\bibitem{ku}
 Kuznetsov~A. Lefschetz decompositions and categorical resolutions of singularities.
Selecta Math. 13 (2008), no. 4, 661--696.

  \bibitem{kl}%%
 Kuznetsov~A. \& Lunts~V.
 Categorical resolutions of irrational singularities.
 arXiv:1212.6170\,[math.AG].

\bibitem{lw}
 Leuschke~G. \& Wiegand~R.
Cohen-Macaulay representations.
Mathematical Surveys and Monographs, 181. American Mathematical Society 2012.

\bibitem{liu}
 Liu~Q. Algebraic geometry and arithmetic curves. Oxford University Press 2002

\bibitem{or}
 Orlov~D.
 Remarks on generators and dimensions of triangulated categories.
  Moscow Math.~J. 9 (2009),  153--159.

\bibitem{pr}
Palm\'er~I. \& Roos J.-E.
Explicit formulae for the global homological dimensions of trivial extensions of rings.
J. Algebra 27 (1973), 380--413.

\bibitem{rou}
 Rouquier~R.
 Dimensions of triangulated categories.
 J.~K-Theory 1 (2008), 193--256.

 \bibitem{vdb}
  Van den Bergh ~M.
  Non--commutative crepant resolutions.
  The legacy of Niels Henrik Abel, 749--770, Springer, 2004.

\bibitem{Yoshino}
 Yoshino~Y. Cohen--Macaulay modules over Cohen--Macaulay rings. London Mathematical Society Lecture Note Series, 146. Cambridge University Press, Cambridge, 1990.

\bibitem{Wei} Wei~Z.  The full exceptional collections of categorical resolutions of curves,
arXiv:1506.02991.

\end{thebibliography}
\end{document}